\newtheorem{keythm}{Theorem}
\newtheorem{keyprop}[keythm]{Proposition}
{\theoremstyle{definition} }
\newtheorem{thm}{Theorem}[section]
\newtheorem{cor}[thm]{Corollary}
\newtheorem{lem}[thm]{Lemma}
\newtheorem{prop}[thm]{Proposition}
{\theoremstyle{definition} \newtheorem{defn}[thm]{Definition} }
{\theoremstyle{remark} }
{\theoremstyle{remark} }
\newtheorem{bigclm}[thm]{Claim}
{\theoremstyle{remark} }
 \newcommand{\RR}{\mathbb{R}}
\newcommand{\CC}{\mathbb{C}} 
\newcommand{\NN}{\mathbb{N}} 
\def\Kopmore{\mathbf{K}_{t, a}}
\newcommand{\disp}[0]{\displaystyle}
\newcommand{\bracepair}[1]{\left\lbrace #1 \right\rbrace}
\newcommand{\anglepair}[1]{\left\langle #1 \right\rangle}
\newcommand{\parenpair}[1]{\left( #1 \right)}
\newcommand{\vertpair}[1]{\left\vert #1 \right\vert}
\renewcommand{\Re}{\operatorname{Re}}
\newcommand{\thisp}{p}
\newcommand{\thisq}{q}
\newcommand{\oCd}{m}
\newcommand{\ourvarT}{t}
\newcommand{\ourintT}{T}
\newcommand{\ourfracT}{\tau}
\newcommand{\ourvarR}{s}
\newcommand{\ourintR}{S}
\newcommand{\ourfracR}{\sigma}
\newcommand{\genercpx}{\zeta}
\newcommand{\mrep}{\mu}
\newcommand{\setn}{\overline{n}}
\newcommand{\risingfac}[2]{(#1)_{#2}}
\newcommand{\xvec}{\mathbf{x}}
\newcommand{\yvec}{\mathbf{y}}
\newcommand{\cvec}{\mathbf{c}}
\begin{document}
\title{On the Positive-Definiteness of an Anisotropic Operator}
\author{Charles E. Baker}
\address{%
Department of Mathematics\\
The Ohio State University\\
Columbus, OH 43210}
\email{\url{baker.1656@osu.edu}}

\subjclass[2010]{42A82 (Primary), 47B65 (Secondary)}

\keywords{Integral Operators, Positive-Definite Kernels}

\begin{abstract}
We study the positive-definiteness of a family of $L^2(\RR)$ integral operators with kernel $K_{t, a}(x, y) = ( 1 + (x-y)^2 + a(x^2 + y^2)^t)^{-1}$, for $t > 0$ and $a > 0$.  For $0 < t \leq 1$ and $a > 0$, the known theory of positive-definite kernels and conditionally negative-definite kernels confirms positive-definiteness.  For $t > 1$ and $a$ sufficiently large, the integral operator is \textit{not} positive-definite.  For $t$ not an integer, but with integer odd part, the integral operator is \textit{not} positive-definite.  
\end{abstract}
\maketitle
\normalem

\section{Introduction}

In this paper, we study the integral operators $\Kopmore$ defined by

\begin{equation}
\Kopmore [c](x) = \int_{\RR} K_{\ourvarT, a} (x, y) c(y) \, dy , \quad c \in L^2(\RR),
\label{eq:intta}
\end{equation}
where 
$$ K_{\ourvarT, a} (x, y) = \frac{1}{\pi} \cdot \frac{1}{1 + (x - y)^2 + a(x^2 + y^2)^{\ourvarT}} , \, t > 0,\, a > 0.$$
$\Kopmore$ is a bounded, compact operator from $L^2 (\RR)$ to itself for $\ourvarT > 0$ and $a > 0$.  This is a straightforward verification using the basic facts of compact operators and the Schur Test.

This operator (with $\ourvarT = 2$) was considered by P. Krotkov and A.\\
Chubukov in the papers \cite{KrotChubOne}, \cite{KrotChubTwo} as a part of their simplified model for high-tempera-ture superconductivity.  The asymptotics of the largest eigenvalue of this operator, around $a = 0$, were studied in the papers \cite{MitAn}, \cite{MitSob}, and \cite{Adduci}.  Examining an open question stated in Section 6.2 of \cite{MitSob}, we wish to determine for which $\ourvarT > 0$ and $a > 0$ the operator $\Kopmore$ is \textit{positive-definite}; that is, we wish to determine for which $\ourvarT > 0$ and $a > 0$ the inequality
\[
\anglepair{\Kopmore[c], c} = \iint\limits_{\RR \times \RR} K_{\ourvarT, a}(x, y) c(x) \overline{c(y)} \, dy \, dx \geq 0
\]
holds for all $c \in L^2(\RR)$.

By the continuity of the kernel $K_{\ourvarT, a}$ and the boundedness of the operator $\Kopmore$, positive-definiteness of $\Kopmore$ is equivalent to the statement that
\begin{equation}
\sum_{j = 1}^n \sum_{k = 1}^n c_j \bar{c}_k K_{\ourvarT, a}(x_j, x_k) \geq 0 \label{eq:PDKFta}
\end{equation}
for all $n \in \NN$, $\xvec = (x_1, \dotsc, x_n) \in \RR^n$, and $\cvec  = (c_1, \dotsc, c_n) \in \CC^n$.  

More generally, for any nonempty set $X$, a function $K: X \times X \to \CC$ is called a \textit{positive-definite kernel} if 
\begin{equation}
\sum_{j = 1}^n \sum_{k = 1}^n c_j \bar{c}_k K(x_j, x_k) \geq 0 \label{eq:PDKF}
\end{equation}
holds for all $n \in \NN$, $\xvec = (x_1, \dotsc, x_n) \in X^n$, and $\cvec = (c_1, \dotsc, c_n) \in \CC^n$.  

In this paper, we prove the following.
\begin{keyprop}
For $0 < \ourvarT \leq 1$ and $a > 0$, $K_{\ourvarT, a}$ is a positive-definite kernel; hence, $\Kopmore$ is a positive-definite operator. \label{prop:smallt} 
\end{keyprop}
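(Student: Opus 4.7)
The plan is to invoke the standard correspondence between positive-definite (PD) and conditionally negative-definite (CND) kernels. Recall Schoenberg's theorem: a continuous Hermitian $\psi : X \times X \to \RR$ is CND if and only if $e^{-s\psi}$ is PD for every $s > 0$. Using the Laplace representation
\[
\frac{1}{\pi(1 + \phi_{t,a}(x,y))} = \frac{1}{\pi}\int_0^\infty e^{-s}\, e^{-s\phi_{t,a}(x,y)} \, ds, \qquad \phi_{t,a}(x,y) := (x-y)^2 + a(x^2+y^2)^t,
\]
positive-definiteness of $K_{t,a}$ reduces to showing that $\phi_{t,a}$ is CND: each integrand is then PD by Schoenberg, and a positive-weighted integral of PD kernels remains PD.

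Since CND is preserved under nonnegative linear combinations, it suffices to prove that $(x,y)\mapsto(x-y)^2$ and $(x,y)\mapsto(x^2+y^2)^t$ are individually CND. The first is a one-line computation: for coefficients with $\sum_j c_j = 0$, expansion gives $\sum_{j,k} c_j \bar c_k (x_j - x_k)^2 = -2\bigl|\sum_j c_j x_j\bigr|^2 \le 0$. The case $t=1$ of the second is equally immediate, since $x^2+y^2$ has the separable form $f(x)+f(y)$ and hence pairs to $0$ with any mean-zero coefficient sequence.

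The main obstacle is the range $0 < t < 1$, where I would use the Bernstein representation
\[
r^t = \frac{t}{\Gamma(1-t)} \int_0^\infty (1 - e^{-sr}) s^{-1-t} \, ds, \qquad r \geq 0,
\]
derivable by Fubini and the gamma integral. Substituting $r = x_j^2 + x_k^2$ and testing against a mean-zero sequence $(c_j)$ kills the constant $1$, leaving
\[
\sum_{j,k} c_j \bar c_k (x_j^2 + x_k^2)^t = -\frac{t}{\Gamma(1-t)} \int_0^\infty \Bigl|\sum_j c_j e^{-s x_j^2}\Bigr|^2 s^{-1-t} \, ds \le 0,
\]
where the nonnegative integrand arises from the rank-one factorization $e^{-s(x_j^2+x_k^2)} = e^{-sx_j^2}\,\overline{e^{-sx_k^2}}$. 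The remaining technical checks—interchanging the finite sum with the integral, verifying absolute convergence of the Bernstein integral via the bound $1-e^{-sr} \le \min(sr,1)$, and confirming that Schoenberg applies (the kernel $\phi_{t,a}$ is continuous and real-valued)—are routine, so the proof structure is complete.
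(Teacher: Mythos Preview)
Your proof is correct and follows essentially the same route as the paper: both reduce to showing that $\phi_{t,a}(x,y)=(x-y)^2+a(x^2+y^2)^t$ is conditionally negative-definite, split it into its two summands, and handle the fractional power $(x^2+y^2)^t$ via the Bernstein-type integral representation of $r\mapsto r^t$. The only difference is cosmetic: the paper cites the relevant black boxes from Berg--Christensen--Ressel (their result that $1/(r+K)$ is PD iff $K$ is CND with values in $\CC_+$, and their Corollary~3.2.10 that $K^t$ is CND for $0<t\le 1$ whenever $K$ is CND with $K(x,x)\ge 0$), whereas you unpack those same results by writing out the Laplace and Bernstein integrals explicitly.
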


\begin{keyprop}
If $\ourvarT > 1$, and $\disp a >  a_0(\ourvarT) := \frac{2^{\ourvarT^2 - 1} + 2^{\ourvarT^2 - t}}{\left( 2^{\ourvarT-1}-1 \right)^{(2\ourvarT - 1)}} $,\vspace{6 pt}  then $K_{\ourvarT, a}$ is \uline{not} a positive-definite kernel; hence, $\Kopmore$ is \uline{not} a positive-definite operator.  \label{prop:biga}
\end{keyprop}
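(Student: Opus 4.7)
The plan is to falsify positive-definiteness using a single pair of points: I will exhibit $x_1, x_2 \in \RR$ for which the $2 \times 2$ Gram matrix $[K_{\ourvarT, a}(x_j, x_k)]_{j, k = 1}^{2}$ has negative determinant, equivalently
\[
K_{\ourvarT, a}(x_1, x_1) \, K_{\ourvarT, a}(x_2, x_2) < K_{\ourvarT, a}(x_1, x_2)^2 ;
\]
an eigenvector associated to the negative eigenvalue then yields $\cvec \in \CC^2$ violating \eqref{eq:PDKFta}. Taking $x_1 = 0$ and $x_2 = R > 0$ is computationally convenient since $K_{\ourvarT, a}(0, 0) = 1/\pi$; clearing denominators, the determinant inequality rearranges to
\[
\left(1 + R^2 + a R^{2\ourvarT}\right)^2 < 1 + 2^{\ourvarT} a R^{2\ourvarT} .
\]

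I will next substitute $u = R^2$ and $z = a u^{\ourvarT - 1}$, so that $a R^{2\ourvarT} = zu$, $a R^{2\ourvarT + 2} = z u^2$, and $a^2 R^{4\ourvarT} = z^2 u^2$. Expanding the square, collecting terms, and dividing through by $u > 0$ reduces the inequality to the linear form
\[
u (1 + z)^2 < (2^{\ourvarT} - 2) z - 2 ,
\]
whose right-hand side is positive exactly when $z > 1/(2^{\ourvarT - 1} - 1)$; this range is nonempty precisely because $\ourvarT > 1$. The strategy is now to pick a specific $z$ past this threshold and, via $u = (z/a)^{1/(\ourvarT - 1)}$, convert the upper bound on $u$ into a lower bound on $a$.

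The key calculation is the choice $z = (2^{\ourvarT - 1} + 1)/(2^{\ourvarT - 1} - 1)$. A short computation gives $(2^{\ourvarT} - 2) z - 2 = 2^{\ourvarT}$ and $(1 + z)^2 = 2^{2\ourvarT}/(2^{\ourvarT - 1} - 1)^2$, so the constraint collapses to $u < (2^{\ourvarT - 1} - 1)^2 / 2^{\ourvarT}$. Raising to the $(\ourvarT - 1)$-th power and replacing $u^{\ourvarT - 1}$ by $z/a$ rearranges this into $a > z \cdot 2^{\ourvarT(\ourvarT - 1)} / (2^{\ourvarT - 1} - 1)^{2(\ourvarT - 1)}$, which distributes to exactly $a > a_0(\ourvarT)$. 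So whenever $a > a_0(\ourvarT)$, the explicit value $R = (z/a)^{1/(2(\ourvarT - 1))}$ exhibits the failure of positive-definiteness at $\{0, R\}$, which settles the claim.

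The main obstacle is purely algebraic: organizing the substitution $(u, z)$ so that the original quartic-in-$R$ inequality becomes linear in $u$, and identifying the value of $z$ for which the resulting threshold in $a$ matches the stated $a_0(\ourvarT)$ on the nose. No optimization over $z$ is needed — any $z$ past $1/(2^{\ourvarT - 1} - 1)$ would give a valid sufficient condition, just with a larger threshold — so there is no analytic subtlety beyond the calculation.
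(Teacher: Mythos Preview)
Your proof is correct and shares the paper's starting point: both test positive-definiteness via the $2\times 2$ Schwarz inequality $K(x,x)\,K(0,0) \geq K(x,0)^2$ with one point at the origin, and both land on exactly the threshold $a_0(t)$. The execution differs. The paper writes the defect as a quadratic $g(z;t,a)$ in $a$ (with $z=x^2$), minimizes over $a$ to get $\tilde a_t(z)=(2^{t-1}-(1+z))/z^t$, identifies the interval $(0,z_0)$ with $z_0=(2^{t-1}-1)^2/2^t$ on which that minimum is negative, and then proves a separate lemma that $\tilde a_t$ is strictly decreasing there with range $(a_0(t),\infty)$. You instead substitute $(u,z)=(R^2,\,a u^{t-1})$, which makes the constraint \emph{linear} in $u$, and then plug in one well-chosen value of $z$ to read off the threshold directly. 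Your route is a bit shorter because it sidesteps the monotonicity lemma entirely; and your choice $z=(2^{t-1}+1)/(2^{t-1}-1)$ is not arbitrary---it corresponds precisely to the paper's boundary point $u=z_0$, $a=\tilde a_t(z_0)$, so the two computations are tracing the same critical pair from different parametrizations.
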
 

\begin{keythm}
If $\ourvarT \in \bigcup_{k \in \NN} (2k - 1, 2k)$, and if $a > 0$, then $K_{\ourvarT, a}$ is \uline{not} a positive-definite kernel; hence, $\Kopmore$ is \uline{not} a positive-definite operator. \label{thm:oddt}
\end{keythm}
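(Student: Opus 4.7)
The plan, for each fixed $a > 0$, is to exhibit a finite test $(x_1, \ldots, x_n, c_1, \ldots, c_n)$ that renders the quadratic form in \eqref{eq:PDKF} strictly negative. Fix $k \in \NN$ with $t \in (2k - 1, 2k)$, and take $n = 2k + 1$ points $x_j = A y_j$ for a scale $A > 0$ to be chosen after $a$, with node pattern $y_j := j - k$ and weights $c_j := (-1)^j \binom{2k}{j}$ (the classical $(2k)$-th finite-difference coefficients). Expanding $K_{t, a}(x_j, x_\ell)$ as a geometric series in $u = (x_j - x_\ell)^2 + a(x_j^2 + x_\ell^2)^t$ and grouping by powers of $A$ and $a$ yields
\[
\pi \sum_{j, \ell} c_j c_\ell\, K_{t, a}(x_j, x_\ell) = \sum_{r, s \geq 0} (-1)^{r + s} \binom{r + s}{r}\, a^s A^{2r + 2ts}\, \tilde Q_{r, s},
\]
where $\tilde Q_{r, s} := \sum_{j, \ell} c_j c_\ell (y_j - y_\ell)^{2r} (y_j^2 + y_\ell^2)^{ts}$ is independent of $A$ and $a$.

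Two consequences of moment annihilation $\sum_j c_j y_j^m = 0$ for $m = 0, 1, \ldots, 2k - 1$ follow at once: first, binomially expanding $(y_j - y_\ell)^{2r}$ shows $\tilde Q_{r, 0} = 0$ for $r < 2k$, with the leading nonvanishing $s = 0$ contribution $\tilde Q_{2k, 0} = \binom{4k}{2k} ((2k)!)^2 > 0$; second, binomially expanding $(y_j^2 + y_\ell^2)^m$ for integer $m \leq 2k - 1$ shows $\sum_{j, \ell} c_j c_\ell (y_j^2 + y_\ell^2)^m = 0$.

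The crux, which genuinely needs the hypothesis $\lfloor t \rfloor = 2k - 1$, is the strict positivity of $\tilde Q_{0, 1}(t) = \sum_{j, \ell} c_j c_\ell (y_j^2 + y_\ell^2)^t$. For $t \in (2k - 1, 2k)$ one has $\Gamma(-t) > 0$ and the integral representation
\[
r^t = \frac{1}{\Gamma(-t)} \int_0^\infty \left[ e^{-rs} - \sum_{m = 0}^{2k - 1} \frac{(-rs)^m}{m!} \right] s^{-t - 1}\, ds \qquad (r > 0).
\]
Substituting $r = y_j^2 + y_\ell^2$ and summing against $c_j c_\ell$, the polynomial subtraction is annihilated by the second reduction above, leaving
\[
\tilde Q_{0, 1}(t) = \frac{1}{\Gamma(-t)} \int_0^\infty G(s)^2\, s^{-t - 1}\, ds, \qquad G(s) := \sum_j c_j e^{-y_j^2 s}.
\]
The even-moment vanishings $\sum c_j y_j^{2i} = 0$ for $i = 0, \ldots, k - 1$ force $G(s) = (-s)^k (2k)! / k! + O(s^{k + 1})$ near $0$, so $G(s)^2 s^{-t - 1} = O(s^{2k - t - 1})$ is integrable there (as $t < 2k$), while boundedness of $G$ at infinity handles the other tail. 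Since $G \not\equiv 0$, the integral is strictly positive and $\tilde Q_{0, 1}(t) > 0$.

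Combining these facts, every remaining pair $(r, s)$ with $\tilde Q_{r, s} \neq 0$ satisfies $2r + 2ts > 2t$, so the leading asymptotic as $A \to 0^+$ is
\[
\pi \sum_{j, \ell} c_j c_\ell\, K_{t, a}(x_j, x_\ell) = -\, a A^{2t}\, \tilde Q_{0, 1}(t) + o(A^{2t}).
\]
For any fixed $a > 0$, choosing $A$ small enough makes the right-hand side strictly negative, contradicting \eqref{eq:PDKF}. The main obstacle is establishing $\tilde Q_{0, 1}(t) > 0$; both the sign of $1/\Gamma(-t)$ on $(2k - 1, 2k)$ and the precise count $2k$ of subtracted Taylor terms are tied to the hypothesis $\lfloor t \rfloor = 2k - 1$ being odd, so the argument genuinely requires $t \in \bigcup_k (2k - 1, 2k)$.
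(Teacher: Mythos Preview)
Your proof is correct and follows essentially the same strategy as the paper: scale the test points by a small parameter, use finite-difference weights so that moment conditions $\sum_j c_j y_j^\ell=0$ ($0\le\ell\le \lfloor t\rfloor$) annihilate all contributions of order below $A^{2t}$, and then determine the sign of the surviving $A^{2t}$ coefficient $\sum_{j,\ell}c_jc_\ell(y_j^2+y_\ell^2)^t$ via an integral representation for noninteger powers, whose prefactor is positive exactly when $\lfloor t\rfloor$ is odd. The only cosmetic differences are that the paper clears denominators first (producing a finite generalized polynomial in $z=x^2$, so no geometric-series remainder is needed) and uses one-sided nodes $y_j=j-1$ rather than your centered nodes $y_j=j-k$; your Taylor-expansion argument for $G\not\equiv 0$ neatly replaces the paper's linear-independence Lemma~\ref{lem:biglinind}, which would not apply to centered nodes since the $y_j^2$ are not distinct.
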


We prove these statements in Sections~\ref{sec:CNDK}, \ref{sec:Necessary}, and \ref{sec:mainsec}, respectively.  

\section{Conditionally Negative-Definite Kernels: the case \texorpdfstring{$0 < t \leq 1$}{0 < t <= 1}} \label{sec:CNDK}

\begin{defn}  Let $X$ be a nonempty set. A function $K: X \times X \to \CC$ is a \textit{conditionally negative-definite kernel} if it is Hermitian (that is, for all $x$ and $y$ in $X$, $K(y, x) = \overline{K(x, y)} $) and satisfies
\begin{equation}
\sum_{j = 1}^n \sum_{k = 1}^n c_j \bar{c}_k K(x_j, x_k) \leq 0
\label{eq:CND}
\end{equation}
for all $n \in \NN$, $n \geq 2$,  $\xvec = (x_1, \dotsc, x_n) \in X^n$, and $\cvec = (c_1, \dotsc, c_n) \in \CC^n$ with $\disp \sum_{j = 1}^n c_j = 0$.  
\end{defn}

Our interest in conditionally negative-definite kernels stems from the following connection to positive-definite kernels.  Let $\CC_+ := \bracepair{\zeta \in \CC : \Re \zeta  \geq 0}$.
\begin{prop}[\cite{BCR}, p. 75]
$K: X \times X \to \CC_+$ is a conditionally negative-definite kernel if and only if, for all $r > 0$, 
\[ \disp
\frac{1}{r + K(x, y)}
\] is a positive-definite kernel. 
\label{prop:inverse}
\end{prop}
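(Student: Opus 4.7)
The plan is to prove both implications via the elementary Laplace-transform identity $(r + \genercpx)^{-1} = \int_0^\infty e^{-\ourvarR r} e^{-\ourvarR \genercpx} \, d\ourvarR$ (valid for $\Re \genercpx \geq 0$ and $r > 0$), reducing the claim to the Schoenberg-type fact that a Hermitian kernel $\phi : X \times X \to \CC_+$ is CND if and only if $e^{-\ourvarR \phi}$ is positive-definite for every $\ourvarR > 0$. The other ingredients are routine closure properties: PD and CND kernels are each closed under nonnegative linear combinations, pointwise limits, and positive integrals against a positive measure, and the PD property is automatically Hermitian.

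For the forward direction, I would assume $K$ is CND with $\Re K \geq 0$. Schoenberg then yields that $e^{-\ourvarR K}$ is PD for every $\ourvarR > 0$. Plugging into the Laplace identity pointwise,
\[
\frac{1}{r + K(x, y)} \;=\; \int_0^\infty e^{-\ourvarR r} \, e^{-\ourvarR K(x, y)} \, d\ourvarR ,
\]
with bounded integrand since $\vertpair{e^{-\ourvarR K(x, y)}} \leq 1$. A positive integral of PD kernels is PD, so $(r + K)^{-1}$ is PD for each $r > 0$.

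For the reverse direction, I would assume $(r + K)^{-1}$ is PD for every $r > 0$; Hermitian-ness of $(r+K)^{-1}$ immediately forces $K$ to be Hermitian. I would then use the algebraic identity
\[
\frac{r K(x, y)}{r + K(x, y)} \;=\; r \;-\; \frac{r^2}{r + K(x, y)} .
\]
For $\cvec = (c_1, \dotsc, c_n) \in \CC^n$ with $\sum_j c_j = 0$, the constant $r$ piece contributes $0$, and the hypothesis gives
\[
\sum_{j, k} c_j \bar{c}_k \, \frac{r K(x_j, x_k)}{r + K(x_j, x_k)} \;=\; -r^2 \sum_{j, k} c_j \bar{c}_k \, \frac{1}{r + K(x_j, x_k)} \;\leq\; 0 .
\]
Sending $r \to \infty$ uses the pointwise limit $\frac{rK}{r + K} \to K$ on $\CC_+$; the sum is finite, so the limit passes through and yields CND-ness of $K$.

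The main obstacle is the forward direction's reliance on Schoenberg's theorem, whose own proof is nontrivial and proceeds by embedding $X$ into a Hilbert space using the CND structure and then invoking positivity of the Gaussian family to obtain $e^{-\ourvarR \phi}$ as PD. The reverse direction is essentially formal once one recognizes the right algebraic identity, and the finite-sum limit interchange poses no real difficulty.
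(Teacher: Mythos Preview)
The paper does not actually prove this proposition: it is quoted verbatim from \cite{BCR}, p.~75, and used as a black box. So there is no ``paper's own proof'' to compare against here.

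That said, your sketch is correct and is essentially the argument one finds in \cite{BCR} itself. The forward direction is exactly their route: invoke Schoenberg's theorem (\cite{BCR}, Theorem~3.2.2) to get $e^{-\ourvarR K}$ positive-definite, then integrate against the Laplace weight $e^{-\ourvarR r}\,d\ourvarR$. For the converse, your identity $\dfrac{rK}{r+K} = r - \dfrac{r^2}{r+K}$ together with the zero-sum constraint and the pointwise limit $r\to\infty$ is clean and correct; the finite sum makes the limit interchange trivial. The one point worth stating explicitly is that the hypothesis $K:X\times X\to\CC_+$ is what guarantees both that $r+K(x,y)\neq 0$ for $r>0$ and that the Laplace integral converges, so you are using it in both directions.
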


Therefore, if $(x-y)^2 + a(x^2 + y^2)^{\ourvarT}$ is a conditionally negative-definite kernel for some $\ourvarT > 0$ and $a > 0$, then by the $r = 1$ case of Proposition~\ref{prop:inverse},\vspace{6pt} \\ $\disp \frac{1}{1 + (x-y)^2 + a (x^2 + y^2)^{\ourvarT}}$ \vspace{6pt} is a positive-definite kernel, and hence $ K_{\ourvarT, a}$  is a positive-definite kernel.  To prove Proposition~\ref{prop:smallt}, it therefore suffices to demonstrate the following.
\begin{bigclm}
If $0 < \ourvarT \leq 1$ and $a > 0$, then $(x-y)^2 + a(x^2 + y^2)^{\ourvarT}$ is a conditionally negative-definite kernel.
\label{claim:CNDcases}
\end{bigclm}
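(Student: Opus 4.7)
The plan is to express $(x-y)^2 + a(x^2+y^2)^t$ as a nonnegative combination of CND kernels, exploiting the closure of the class of CND kernels under sums and nonnegative scalar multiples. Two elementary building blocks suffice. First, any kernel of the form $f(x) + f(y)$ is CND, because the zero-sum condition $\sum_j c_j = 0$ annihilates both contributions in the CND double sum. Second, for any real $g: \RR \to \RR$, the kernel $-g(x)g(y)$ is CND, since $\sum_{j,k} c_j \bar c_k\, g(x_j) g(x_k) = \big|\sum_j c_j g(x_j)\big|^2 \ge 0$. Expanding $(x-y)^2 = (x^2+y^2) + 2(-xy)$ writes it as a nonnegative combination of these two types, so $(x-y)^2$ is CND. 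Similarly, the case $t = 1$ of $(x^2+y^2)^t$ is already of the first type with $f(u) = u^2$.

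The substantive work is in the case $0 < t < 1$. The idea is to use the Laplace-type integral identity
\[
s^t \;=\; \frac{t}{\Gamma(1-t)} \int_0^\infty \bigl(1 - e^{-us}\bigr)\, u^{-1-t} \, du, \qquad s \ge 0,
\]
applied pointwise with $s = x^2 + y^2$. For each fixed $u > 0$, the kernel $e^{-u(x^2+y^2)} = e^{-ux^2}\cdot e^{-uy^2}$ is a rank-one positive-definite kernel, so $1 - e^{-u(x^2+y^2)}$ is CND: the constant contributes $0$ under the zero-sum condition on $\cvec$, while the exponential piece contributes $-\big|\sum_j c_j e^{-u x_j^2}\big|^2 \le 0$. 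Bringing the finite double sum $\sum_{j,k} c_j \bar c_k$ inside the $u$-integral by linearity and invoking the pointwise inequality shows that $(x^2+y^2)^t$ is CND.

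The main conceptual step here is the Laplace representation of $s^t$, which is the standard subordination identity for the fractional power viewed as a Bernstein function; it can be verified directly by substituting $v = us$ and integrating by parts. Once this is in hand, the rest is formal: multiplication by $a > 0$ and summation with $(x-y)^2$ preserve CND, yielding $(x-y)^2 + a(x^2+y^2)^t$ as CND and thus proving Claim~\ref{claim:CNDcases}. No real analytic subtlety arises, because the interchange of integral and sum is over a finite index set; the only integrability one must note is that, for each fixed $s \ge 0$, $(1 - e^{-us})u^{-1-t}$ is $O(s\,u^{-t})$ near $u = 0$ and $O(u^{-1-t})$ near $u = \infty$, both integrable in their respective regimes.
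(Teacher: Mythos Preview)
Your proof is correct. The difference from the paper's argument is one of packaging rather than substance: the paper simply cites the general fact (Proposition~\ref{prop:CNDReduce}, i.e., \cite{BCR}, Corollary 3.2.10) that if $K$ is conditionally negative-definite with $K(x,x) \geq 0$ then $K^{\ourvarT}$ is conditionally negative-definite for $0 < \ourvarT \leq 1$, and applies it to $K(x,y) = x^2 + y^2$. You instead reprove exactly this step in the special case at hand, via the Bernstein/Laplace identity $s^{\ourvarT} = \frac{\ourvarT}{\Gamma(1-\ourvarT)}\int_0^\infty (1 - e^{-us})\,u^{-1-\ourvarT}\,du$, which is precisely the mechanism underlying the cited result in \cite{BCR}. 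Your treatment of $(x-y)^2$ is likewise a direct verification of what the paper cites from \cite{BCR}, Section 3.1.10. The upshot: your version is fully self-contained and makes the subordination idea explicit, at the cost of a bit more length; the paper's version is shorter because it leans on the reference. Both arrive at the same conclusion by the same underlying mechanism.
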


To prove this, we use the following fact.
\begin{prop}[\cite{BCR}, Corollary 3.2.10]
If $K: X \times X \to \CC$ is a conditionally negative-definite kernel and satisfies $K(x, x) \geq 0$ for all $x \in X$, then $K^{\ourvarT}$ is a conditionally negative-definite kernel for all $\ourvarT$ such that $0 < \ourvarT \leq 1$.  
\label{prop:CNDReduce}
\end{prop}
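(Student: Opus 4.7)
The plan is to combine Schoenberg's classical characterization of CND kernels with a subordination formula that writes $\lambda \mapsto \lambda^{\ourvarT}$, for $0 < \ourvarT < 1$, as a positive superposition of the functions $\lambda \mapsto 1 - e^{-s\lambda}$, $s > 0$. The boundary case $\ourvarT = 1$ is automatic, so the work is entirely in $0 < \ourvarT < 1$.

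First, I would establish that $K$ takes values in the closed right half-plane. Testing the CND inequality with $n = 2$ and $(c_1, c_2) = (1, -1)$ gives $K(x, x) + K(y, y) \leq 2 \Re K(x, y)$, so the hypothesis $K(x, x) \geq 0$ forces $\Re K(x, y) \geq 0$ for all $x, y \in X$. Consequently, $K^{\ourvarT} := \exp(\ourvarT \log K)$ is unambiguously defined via the principal branch of the logarithm, and Hermiticity of $K^{\ourvarT}$ transfers from that of $K$ through that branch.

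Next, I would invoke two main inputs. The first is Schoenberg's theorem (e.g.\ BCR Theorem 3.2.2): a Hermitian kernel $K$ is conditionally negative-definite if and only if, for every $s > 0$, the kernel $e^{-s K(x, y)}$ is positive-definite. The second is the subordination identity
\[
\lambda^{\ourvarT} = \frac{\ourvarT}{\Gamma(1 - \ourvarT)} \int_0^\infty \frac{1 - e^{-s\lambda}}{s^{1 + \ourvarT}} \, ds,
\]
valid for real $\lambda > 0$ by a single integration by parts (yielding $\Gamma(1 - \ourvarT)/\ourvarT$ after the substitution $u = s\lambda$), and then extended to the closed right half-plane by analytic continuation, since both sides are holomorphic on the open right half-plane and continuous up to its boundary.

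To finish, fix $n \geq 2$, $\xvec \in X^n$, and $\cvec \in \CC^n$ with $\sum_j c_j = 0$. Substituting the subordination identity into $\sum_{j, k} c_j \bar c_k K(x_j, x_k)^{\ourvarT}$ and swapping sum with integral, the constant term in the integrand drops out because $\sum_{j, k} c_j \bar c_k = \vertpair{\sum_j c_j}^2 = 0$, leaving
\[
\sum_{j, k} c_j \bar c_k K(x_j, x_k)^{\ourvarT} = -\frac{\ourvarT}{\Gamma(1 - \ourvarT)} \int_0^\infty s^{-1 - \ourvarT} \sum_{j, k} c_j \bar c_k \, e^{-s K(x_j, x_k)} \, ds.
\]
Schoenberg's theorem guarantees that the inner double sum is $\geq 0$ for each fixed $s > 0$, and the prefactor is positive, so the right-hand side is $\leq 0$. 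This is precisely the CND inequality for $K^{\ourvarT}$.

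The main obstacle is purely technical: justifying Fubini above. Integrability at $s = \infty$ follows because $\Re K \geq 0$ keeps each $e^{-s K(x_j, x_k)}$ bounded while $s^{-1-\ourvarT}$ decays; integrability near $s = 0$ relies crucially on the cancellation $\sum_j c_j = 0$, which converts the apparent singularity $s^{-1-\ourvarT}$ into an $O(s^{-\ourvarT})$ singularity, integrable precisely because $\ourvarT < 1$.
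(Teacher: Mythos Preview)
Your argument is correct. The paper does not supply its own proof of this proposition; it quotes the result from \cite{BCR}, Corollary~3.2.10, and uses it as a black box. Your proof is in fact the one given in \cite{BCR}: the paper itself remarks, just before the proof of Lemma~\ref{lem:wt} in Section~\ref{sec:appendix}, that the subordination identity you invoke is exactly what Berg, Christensen, and Ressel present in their proof of this result, and combining it with Schoenberg's theorem is the standard route.

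One small remark: what you call ``Fubini'' is only linearity of the integral over a finite sum, and the cancellation $\sum_j c_j = 0$ is a pointwise simplification of the integrand rather than a splitting of the integral. Convergence of the displayed integral therefore follows immediately from convergence of each individual subordination integral, so the separate estimate near $s = 0$ is not actually needed (though it is of course harmless).
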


\begin{proof}[Proof of Claim~\ref{claim:CNDcases}]
First, $(x - y)^2$ is a conditionally negative-definite kernel on $\RR \times \RR$: see \cite{BCR}, Section 3.1.10.  Therefore, fixing $\ourvarT$, $0 < \ourvarT \leq 1$, and $a > 0$, it suffices to show that $(x^2 + y^2)^{\ourvarT}$ is a conditionally negative-definite kernel on $\RR \times \RR$, since the class of conditionally negative-definite kernels is closed under addition and positive scalar multiplication (\cite{BCR}, 3.1.11).

Moreover, for any function $f:X \to \CC$, $f(x) + \overline{f(y)}$ is a conditionally negative-definite kernel; see \cite{BCR}, Section 3.1.9.  In particular, $x^2 + \overline{y^2} = x^2 + y^2$ is a conditionally negative-definite kernel on $\RR \times \RR$. We may invoke Proposition~\ref{prop:CNDReduce} , with $K(x, y) = x^2 + y^2$, because $x^2 + x^2 \geq 0$ whenever $x \in \RR$. Therefore, $(x^2 + y^2)^{\ourvarT}$ is a conditionally negative-definite kernel for all $\ourvarT$, $0 < \ourvarT \leq 1$, as required.   
\end{proof}

\section{A necessary condition: the case \texorpdfstring{$\ourvarT > 1$, $a$}{t > 1, a} large} \label{sec:Necessary}

The $n = 2$ case of \eqref{eq:PDKFta} implies that for all $x, y \in \RR$,

\begin{equation}
\vert K_{\ourvarT, a}(x, y) \vert^2 \leq K_{\ourvarT, a}(x, x) K_{\ourvarT, a}(y, y) \label{eq:Schwarz}
\end{equation}

(see \cite{BCR}, Section 3.1.8), or

\begin{align*}
&\left(\frac{1}{\pi} \cdot \frac{1}{1 + (x-y)^2 + a(x^2 + y^2)^{\ourvarT}}\right)^2\\
& \leq \parenpair{\frac{1}{\pi} \cdot \frac{1}{1 + (x - x)^2 + a(x^2 + x^2)^{\ourvarT}} } \cdot \parenpair{\frac{1}{\pi} \cdot \frac{1}{1 + (y - y)^2 + a(y^2 + y^2)^{\ourvarT}}} .
\end{align*}

We may rewrite this inequality as

\begin{align} \begin{aligned}
0 & \leq & & (1 + (x-y)^2)^2 - 1 \\
&  & &+ 2a \big( (1 + (x-y)^2)(x^2 + y^2)^{\ourvarT} - 2^{\ourvarT-1} ((x^2)^{\ourvarT} + (y^2)^{\ourvarT}) \big)\\
& & &+ a^2 \big((x^2 + y^2)^{2\ourvarT} - (4 x^2 y^2)^{\ourvarT} \big). \label{eq:SchSpecific}
\end{aligned}
\end{align}

We now prove Proposition~\ref{prop:biga} by refuting a special case of \eqref{eq:SchSpecific}.

\begin{proof}[Proof of Proposition~\ref{prop:biga}]
Suppose that for some $\ourvarT > 1$ and $a > 0$, $K_{\ourvarT, a}$ is positive-definite. \eqref{eq:SchSpecific}, with $y = 0$, implies that 
\[
0 \leq (1 + x^2)^2 - 1  + 2a(x^2)^{\ourvarT}\big((1 + x^2) - 2^{\ourvarT-1} \big) + a^2 (x^2)^{2\ourvarT},
\]
or, letting $z = x^2 \geq 0$,
\begin{equation}
0 \leq g(z; \ourvarT, a) := (1 + z)^2 - 1 + 2az^{\ourvarT}\big((1 + z) - 2^{\ourvarT-1}\big) + a^2 z^{2\ourvarT}. \label{eq:a0det}
\end{equation}

Note that unless $(1 + z) < 2^{\ourvarT-1}$, $g(z; \ourvarT, a)$ must be nonnegative for nonnegative $z$.  Note also that $g(0; \ourvarT, a) = 0$.  Therefore, we may assume that $0 < z < 2^{\ourvarT - 1} - 1$.

We minimize $g(z; \ourvarT, a)$ in the variable $a$: the minimizing value of $a$ is
\begin{equation}
a =\widetilde{a}_{\ourvarT}(z) : = \frac{\big(2^{\ourvarT-1} -  (1 + z)\big)}{z^{\ourvarT}}, \quad \text{ or } \quad  az^{\ourvarT} = \big(2^{\ourvarT-1} -  (1 + z)\big), \label{eq:earlyspeca}
\end{equation}
and the minimum value of $g(z; \ourvarT, a)$ in $a$ is 
\[
g(z; \ourvarT, \widetilde{a}_{\ourvarT} (z)) = (1 + z)^2 - 1 - \big(2^{\ourvarT-1} - ( 1 + z)\big)^2 = 2^{\ourvarT} z - (2^{\ourvarT-1} - 1)^2.
\]
Therefore, we see that $g(z; \ourvarT, \widetilde{a}_{\ourvarT} (z)) < 0$ if $\disp 0 < z < z_0 := \frac{(2^{\ourvarT-1} - 1)^2}{2^{\ourvarT}}$ \vspace{4 pt}; this contradicts \eqref{eq:a0det} for $\ourvarT > 1$ and $a = \widetilde{a}_{\ourvarT} (z)$.  We now determine the set of $a$ that can be written as $\widetilde{a}_{\ourvarT}(z)$ for $0 < z < z_0$.

\begin{lem}
For $\ourvarT > 1$, the range of $\widetilde{a}_{\ourvarT}$ on $(0, z_0)$ is $(\widetilde{a}_{\ourvarT}(z_0), \infty)$.
\end{lem}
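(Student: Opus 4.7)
The plan is to show that $\widetilde{a}_{\ourvarT}$ is continuous and strictly decreasing on $(0, z_0)$, with $\widetilde{a}_{\ourvarT}(z) \to \infty$ as $z \to 0^+$, so that by the intermediate value theorem its range on $(0, z_0)$ is exactly $(\widetilde{a}_{\ourvarT}(z_0), \infty)$.

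To carry this out, first I would rewrite $\widetilde{a}_{\ourvarT}(z) = (2^{\ourvarT-1} - 1 - z) z^{-\ourvarT}$ and differentiate in $z$ to obtain
\[
\widetilde{a}_{\ourvarT}'(z) = -z^{-\ourvarT - 1} \bigl[ \ourvarT (2^{\ourvarT - 1} - 1) - (\ourvarT - 1) z \bigr].
\]
For $z > 0$, this is negative precisely when $z < \frac{\ourvarT(2^{\ourvarT - 1} - 1)}{\ourvarT - 1}$.

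The main technical step is then to confirm that $z_0 \leq \frac{\ourvarT(2^{\ourvarT-1}-1)}{\ourvarT-1}$ for every $\ourvarT > 1$, so that $\widetilde{a}_{\ourvarT}'(z) < 0$ throughout $(0, z_0)$. Since $2^{\ourvarT-1} - 1 > 0$ for $\ourvarT > 1$, after dividing by this positive factor the inequality reduces to $\frac{2^{\ourvarT-1}-1}{2^{\ourvarT}} \leq \frac{\ourvarT}{\ourvarT - 1}$. The left side is bounded above by $\tfrac12$, while the right side exceeds $1$ for $\ourvarT > 1$, so this holds comfortably. I don't expect any obstacle here, but this elementary comparison is what drives the monotonicity.

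To finish, I would check the boundary behavior: as $z \to 0^+$, the numerator $2^{\ourvarT - 1} - 1 - z$ tends to the positive constant $2^{\ourvarT-1} - 1$, while $z^{-\ourvarT} \to \infty$, so $\widetilde{a}_{\ourvarT}(z) \to \infty$. Combined with continuity on the closed right endpoint $z_0$ and the strict monotonicity just established, the intermediate value theorem gives that $\widetilde{a}_{\ourvarT}$ maps $(0, z_0)$ bijectively onto $(\widetilde{a}_{\ourvarT}(z_0), \infty)$. A direct (if slightly tedious) simplification of $\widetilde{a}_{\ourvarT}(z_0)$ recovers the constant $a_0(\ourvarT) = \frac{2^{\ourvarT^2 - 1} + 2^{\ourvarT^2 - \ourvarT}}{(2^{\ourvarT-1} - 1)^{2\ourvarT - 1}}$ from the statement of Proposition~\ref{prop:biga}, closing the loop with the preceding discussion.
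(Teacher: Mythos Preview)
Your proposal is correct and follows essentially the same approach as the paper: differentiate $\widetilde{a}_{\ourvarT}$, show the derivative is negative on an interval containing $(0, z_0)$, observe $\widetilde{a}_{\ourvarT}(z) \to \infty$ as $z \to 0^+$, and conclude by the intermediate value theorem. The only cosmetic difference is that the paper does not solve for the exact zero of the derivative; it simply observes that both terms in the numerator of $\widetilde{a}_{\ourvarT}'(z)$ are negative whenever $0 < z < 2^{\ourvarT-1} - 1$, and then checks $z_0 < \tfrac{1}{2}(2^{\ourvarT-1}-1)$, which is marginally quicker than your comparison $\tfrac{2^{\ourvarT-1}-1}{2^{\ourvarT}} \leq \tfrac{\ourvarT}{\ourvarT-1}$ but amounts to the same thing.
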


\begin{proof}
We first show that for $\ourvarT > 1$, $\widetilde{a}_{\ourvarT} (z)$ is continuous and strictly decreasing for $z \in (0, z_0)$.  Indeed, the continuity and differentiability of $\widetilde{a}_{\ourvarT} (z)$, $z > 0$, is evident from \eqref{eq:earlyspeca}, and the derivative in $z$ is
\begin{align*}
\frac{d}{dz} \widetilde{a}_{\ourvarT}(z) &= \frac{-z^{\ourvarT}  - \big(2^{\ourvarT-1} - (1 + z)\big) \ourvarT z^{\ourvarT-1}}{z^{2\ourvarT}},
\end{align*}
which is negative if $ 0 < z < 2^{\ourvarT-1} - 1$.  Since $0 < z < z_0$, and
\[
z_0 = (2^{\ourvarT-1} - 1) \cdot \frac{(2^{\ourvarT-1} - 1)}{2^{\ourvarT}} < \frac{1}{2}(2^{\ourvarT-1} - 1),
\]
$(0, z_0) \subset (0, 2^{\ourvarT-1} - 1)$, so $\widetilde{a}_{\ourvarT}(z)$ is continuous and strictly decreasing on $(0, z_0)$.  

Moreover, for all $\ourvarT > 1$, $\disp \lim_{z \to 0^+} \widetilde{a}_{\ourvarT} (z) = \infty$.  Therefore, as $z$ decreases from $z_0$ to $0$, $\widetilde{a}_{\ourvarT} (z)$ increases continuously from $\widetilde{a}_{\ourvarT} (z_0)$ to $\infty$.  
\end{proof}

Calculating the bounding value of $a$, we determine that
\[ a_0(\ourvarT) := \widetilde{a}_{\ourvarT}(z_0) = \frac{2^{\ourvarT^2 - 1} + 2^{\ourvarT^2 - t}}{(2^{\ourvarT-1} - 1)^{(2\ourvarT - 1)}}, \]
so for all $\ourvarT > 1$ and $a > a_0(\ourvarT)$, $K_{\ourvarT, a}$ is not positive-definite.
\end{proof}

We note that the above argument only studies a very special case of the $n = 2$ condition for positive-definiteness.  For $a \leq a_0(t)$, the positive-definiteness of $a$ is in general undetermined.  We now proceed to rule out positive-definiteness for more $(t, a)$ pairs.

\section{An asymptotics argument: the case \texorpdfstring{$\ourvarT \not\in \NN$, $\lfloor \ourvarT \rfloor$ odd}{t not in N, integer part odd}} \label{sec:mainsec}
\subsection{Rewriting \texorpdfstring{\eqref{eq:PDKFta}}{(2)} to permit an asymptotics argument} \label{subsect:methodintro}
To describe another obstruction to positive-definiteness, we adjust \eqref{eq:PDKFta}.  First, note that since $K_{t, a}$ is a real-valued and symmetric kernel (i.e., $K_{t, a}(y, x) = K_{t, a}(x, y)$), in \eqref{eq:PDKFta}, we may take the $c_j$ to be real as well; see \cite{BCR}, Section 3.1.6.  

Adding together the fractions $\disp c_j \overline{c_k} K_{t, a}(x_j, x_k) = \frac{c_j c_k}{1 + (x_j - x_k)^2 + a(x_j^2 + x_k^2)^t}$ in \eqref{eq:PDKFta}, we note that the resulting denominator is positive, since each summand's denominator is positive.  We thus see that $K_{t, a}$ is a positive-definite kernel if and only if for all $n \in \NN$, for all $\xvec = (x_1, \dotsc, x_n) \in \RR^n$, and for all $\cvec = (c_1, \dotsc, c_n) \in \RR^n$,
\begin{equation}
\sum_{j = 1}^n \sum_{k = 1}^n c_j {c_k} \left( \prod_{\substack{(p, q) \in \bracepair{1, 2, \dotsc , n}^2 \\ (p, q) \neq (j, k) }} \left( 1 + (x_{p} - x_q)^2 + a(x_p^2 + x_q^2)^{\ourvarT}  \right)  \right) \geq 0.
\label{eq:numer}
\end{equation}
To separate the terms by homogeneity, for any fixed $x > 0$, define $y_j$ by $x_j = y_j x$.  By the squaring, in all terms we achieve $x^2 =: z > 0$, so we see that we wish to study the positivity of
\begin{equation}
\disp \sum_{j = 1}^n \sum_{k = 1}^n c_j {c_k} \left( \prod_{\substack{(p, q) \in \bracepair{1, 2, \dotsc , n}^2 \\ (p, q) \neq (j, k) }} \left(1 + (y_{p} - y_q)^2 z + a(y_p^2 + y_q^2)^{\ourvarT} z^{\ourvarT} \right) \right). \label{eq:main2}
\end{equation}
For fixed $n$ in $\NN$, $\yvec = (y_1, \dotsc, y_n) \in \RR^n$, and $\cvec = (c_1, \dotsc, c_n) \in \RR^n$, we therefore define a function from $ z \in \RR^+$ to $\RR$,
\begin{align}
\begin{aligned}
&f\left(z; n, \yvec, \cvec \right) \\
:= &\sum_{j = 1}^n \sum_{k = 1}^n c_j {c_k} \left( \prod_{\substack{(p, q) \in \bracepair{1, 2, \dotsc , n}^2 \\ (p, q) \neq (j, k) }} \left(1 + (y_{p} - y_q)^2 z + a(y_p^2 + y_q^2)^{\ourvarT} z^{\ourvarT} \right) \right),
\end{aligned} \label{eq:fullz}
\end{align}
and the positive-definiteness of $K_{t, a}$ requires that this function is always nonnegative on $(0, \infty)$, for all choices of the parameters $n$, $\yvec$, and $\cvec$. 

We see that after writing out the products and collecting terms with like powers of $z$, $f$ admits a representation as a finite sum of the form
\begin{equation} f\left(z; n, \yvec, \cvec \right) = \sum_{k = 1}^{\nu(n)} b_k\!\!\left( n, \yvec, \cvec \right) \cdot z^{r_k}, 
\label{eq:fullpowers}
\end{equation}
where $r_0 < r_1 < \dotsb < r_{\nu(n)}$ is an enumeration of the distinct powers of $z$ in $f$.  If we can find parameters $n$, $\yvec$, and $\cvec$ such that the term of smallest degree in $z$ has negative coefficient, then by elementary asymptotics, for $z$ positive and small, $f(z; n, \yvec, \cvec)$ will be negative, and so $K_{t, a}$ will not be positive-definite.  

Therefore, we organize the remainder of our paper as follows.  Fix $\ourvarT \not\in \NN$.  In Section~\ref{subsect:intpows}, we will choose a set of parameters $n$, $\yvec$, and $\cvec$ such that all coefficients of terms of degree less than $\ourvarT$ in $f$ are zero.  In Section~\ref{subsect:tpows}, we will show that for the same parameters, the coefficient of $z^{\ourvarT}$  is nonzero, and if the integer part of $\ourvarT$ is odd, it will be negative.  In Section~\ref{subsect:conclude}, we will complete the proof.

Before continuing, however, we note some conventions.  The decomposition of $t$ into its integer and fractional parts will be denoted $\ourvarT = \ourintT + \ourfracT$, where $\ourintT = \lfloor \ourvarT \rfloor$ is the greatest integer less than or equal to $\ourvarT$, and $\ourfracT \in [0, 1)$ is the fractional part.  The set $\bracepair{1, 2, \dotsc, n}$ is denoted $\setn$.  For any set $Y$, the set of $p$-element subsets of $Y$ is denoted $\disp \binom{Y}{p}$; if $Y \neq \emptyset$, $\disp \binom{Y}{0} = \bracepair{\emptyset}$, the set whose only element is the empty set.  Finally, we denote the $n$th rising factorial of $\alpha \in \RR$ as $
\risingfac{\alpha}{n} = (\alpha)(\alpha + 1) \dotsc (\alpha + n - 1)$, for $ n \in \NN $.

\subsection{Removing integer powers of \texorpdfstring{$z$ in $f$}{z in f}} \label{subsect:intpows}  For $\ourvarT$ not in $\NN$, we now study the coefficients of terms in $f$ of degree less than $\ourvarT$.  Since all terms in $f$ are products of terms of degree $0$, $1$, or $\ourvarT$, any term of degree less than $\ourvarT$ must be the product of degree $0$ and $1$ terms, and hence must be a nonnegative integer.  Therefore, to study coefficients of terms of degree less than $\ourvarT$ in $f$, we study the same powers in the following polynomial in $z$,
\begin{equation}
\sum_{j = 1}^n \sum_{k = 1}^n c_j {c_k} \left( \prod_{\substack{(p, q) \in \setn^2 \\ (p, q) \neq (j, k) }} \left(1 + (y_{p} - {y_{q}})^2 z \right) \right),
\label{eq:pureint}
\end{equation}
rather than the full expression in \eqref{eq:fullz}.  We therefore will prove the following statement about the above polynomial.  

\begin{bigclm}\label{clm:noint}
Fix $n \in \NN$, and fix $\oCd \in \NN \cup \bracepair{0}$, $m \leq n^2 - 1$.  If there exist $\yvec = (y_1, \dotsc, y_n) \in \RR^n$ and $\cvec = (c_1, \dotsc, c_n) \in \RR^n$ such that 
$$\sum_{j = 1}^n c_j y_j^{\ell} = 0\, \text{ for all } \, \ell \in \bracepair{0, 1, \dotsc, \oCd},$$
then the coefficient of $z^{\oCd}$ in \eqref{eq:pureint} is zero.
\end{bigclm}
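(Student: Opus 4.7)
The plan is to exhibit a clean factorization of \eqref{eq:pureint} that makes the vanishing transparent. I would begin by defining
\[
G(z) := \prod_{(p, q) \in \setn^2} \bigl(1 + (y_p - y_q)^2 z\bigr),
\]
so that $G(0) = 1$, and then check the identity
\[
\sum_{j = 1}^n \sum_{k = 1}^n c_j c_k \prod_{(p, q) \neq (j, k)} \bigl(1 + (y_p - y_q)^2 z\bigr) \;=\; G(z) \cdot H(z), \quad H(z) := \sum_{j = 1}^n \sum_{k = 1}^n \frac{c_j c_k}{1 + (y_j - y_k)^2 z},
\]
interpreting $H(z)$ as a formal power series in $z$. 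This identity holds even for the diagonal terms $j = k$ because $1 + (y_j - y_k)^2 z = 1$ in that case, so ``dividing $G(z)$'' by it is vacuous. It therefore suffices to show that the coefficients of $z^0, z^1, \dotsc, z^m$ in $H(z)$ all vanish under the hypothesis on $\yvec$ and $\cvec$.

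Next, I would expand $(1 + (y_j - y_k)^2 z)^{-1} = \sum_{\ell \geq 0} (-1)^\ell (y_j - y_k)^{2\ell} z^\ell$ and apply the binomial theorem to get, for each $\ell \geq 0$,
\[
[z^\ell]\,H(z) \;=\; (-1)^\ell \sum_{a = 0}^{2\ell} \binom{2\ell}{a} (-1)^{2\ell - a} \Bigl(\sum_{j=1}^n c_j y_j^{a}\Bigr) \Bigl(\sum_{k=1}^n c_k y_k^{2\ell - a}\Bigr).
\]
For any $\ell \in \{0, 1, \dotsc, m\}$ and any $a \in \{0, 1, \dotsc, 2\ell\}$, we have $\min(a, 2\ell - a) \leq \ell \leq m$, so at least one of the two power-sum factors vanishes by hypothesis. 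Consequently $[z^\ell]\,H(z) = 0$ for every $\ell \in \{0, 1, \dotsc, m\}$, i.e., $H(z)$ is divisible by $z^{m + 1}$ as a formal power series.

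Finally, since $G(z)$ is a polynomial with constant term $1$, multiplying $H(z)$ by $G(z)$ preserves divisibility by $z^{m+1}$; hence \eqref{eq:pureint} is divisible by $z^{m+1}$ and in particular has vanishing coefficient of $z^m$. I do not expect a serious technical obstacle here: the only nonroutine step is spotting the factorization $F(z) = G(z) \cdot H(z)$, after which the vanishing reduces mechanically to the hypothesis on the weighted power sums of the $y_j$.
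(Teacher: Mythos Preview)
Your proof is correct, and at its core it is the same argument as the paper's: both hinge on showing that $(-1)^\ell \sum_{j,k} c_j c_k (y_j-y_k)^{2\ell}$ vanishes for $0\le \ell\le m$ via the binomial expansion and the observation $\min(a,2\ell-a)\le \ell\le m$ (this is exactly the paper's Lemma~\ref{lem:negsums}).

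The difference is in packaging. The paper works coefficient-by-coefficient: it proves by induction (Lemma~\ref{lem:rederiveone}) an identity expressing the $(j,k)$-excluded elementary symmetric sum as an alternating sum involving $(y_j-y_k)^{2v}$ times the full elementary symmetric sums, and then substitutes that into \eqref{eq:zmcontrb} to reach \eqref{eq:intfinal}. Your factorization $F(z)=G(z)H(z)$ in $\RR[[z]]$ is precisely the generating-function statement of which Lemma~\ref{lem:rederiveone} is the $z^m$-coefficient; by recognizing this, you replace the induction with a one-line identity and get all coefficients $z^0,\dotsc,z^m$ at once (so Corollary~\ref{cor:point} falls out immediately as well). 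The paper's route is more explicit and self-contained combinatorially; yours is shorter and makes the structure clearer, at the mild cost of invoking formal power series. Neither gains any real generality over the other.
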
 
The small-$\oCd$ cases of the above yield the following result, which we will use in the sequel.
\begin{cor}
Fix $\ourvarT >0$, $\ourvarT \not\in \NN$, and write $\ourvarT = \ourintT + \ourfracT$ with $\ourintT \in \NN \cup \bracepair{0}$ and $\ourfracT \in (0, 1)$.  Also, fix $a > 0$.  If for some $n \in \NN$, there exist $\yvec = (y_1, \dotsc, y_n) \in \RR^n$ and $\cvec = (c_1, \dotsc, c_n) \in \RR^n$ such that
\begin{equation} \label{eq:pointcond}
\disp \sum_{j = 1}^n c_j y_j^{\ell} = 0\, \text{ for all }\, \ell \in \bracepair{0, 1, \dotsc, \ourintT} ,
\end{equation} then the the coefficients of $z^0, z^1, \dotsc z^{\ourintT}$ in $f$ are all zero, and so the smallest-power term in $f$ is the $z^{\ourvarT}$ term.
\label{cor:point}
\end{cor}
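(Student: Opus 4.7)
The plan is to derive Corollary~\ref{cor:point} directly from Claim~\ref{clm:noint}, via a short degree-matching argument followed by an iterative invocation of the claim.

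First, I would show that for each integer $\oCo$ with $0 \leq \oCo \leq \ourintT$, the coefficient of $z^{\oCo}$ in $f(z; n, \yvec, \cvec)$ from \eqref{eq:fullz} coincides with the coefficient of $z^{\oCo}$ in the polynomial \eqref{eq:pureint}. This is a degree count: expanding the product in \eqref{eq:fullz} amounts to selecting, from each of the $n^2 - 1$ factors, one of the three summands $1$, $(y_p - y_q)^2 z$, or $a(y_p^2 + y_q^2)^{\ourvarT} z^{\ourvarT}$. If the degree-$\ourvarT$ summand is chosen from $j \geq 1$ factors, the resulting monomial has degree at least $j\ourvarT \geq \ourvarT > \ourintT \geq \oCo$, so no such choice can contribute to the coefficient of $z^{\oCo}$. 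The remaining choices, which never pick the degree-$\ourvarT$ summand, reproduce exactly the expansion of \eqref{eq:pureint}.

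Next, I would apply Claim~\ref{clm:noint} with $\oCd = \oCo$ for each $\oCo \in \bracepair{0, 1, \dotsc, \ourintT}$. The hypothesis required for each such application, namely that $\sum_{j=1}^n c_j y_j^{\oCo'} = 0$ for all $\oCo' \in \bracepair{0, 1, \dotsc, \oCo}$, is a sub-condition of \eqref{eq:pointcond} and so holds automatically. By the claim, the coefficient of $z^{\oCo}$ in \eqref{eq:pureint}, and hence by the first step also the coefficient of $z^{\oCo}$ in $f$, vanishes for every $\oCo \in \bracepair{0, 1, \dotsc, \ourintT}$. Finally, since $\ourfracT = \ourvarT - \ourintT \in (0, 1)$ gives $\ourvarT < \ourintT + 1$, and since the powers $r_k$ appearing in the representation \eqref{eq:fullpowers} are all of the form $k + j\ourvarT$ for nonnegative integers $k, j$, no such power lies in $(\ourintT, \ourvarT)$. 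Thus the smallest-power term in $f$ not already forced to vanish is the $z^{\ourvarT}$ term, as claimed.

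The main obstacle is essentially bookkeeping rather than substantive: once Claim~\ref{clm:noint} is granted, the corollary follows by specializing it at each integer $\oCd \leq \ourintT$ and using only that $\ourvarT \notin \NN$ strictly exceeds $\ourintT$, so that the low integer powers of $z$ in $f$ are governed entirely by the integer-power polynomial \eqref{eq:pureint}. The genuine work is deferred to Claim~\ref{clm:noint} itself, whose proof I would tackle separately.
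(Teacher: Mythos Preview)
Your proposal is correct and follows essentially the same approach as the paper: the paper's proof likewise invokes Claim~\ref{clm:noint} for $m = 0, 1, \dotsc, T$ and appeals to the fact (stated in the paragraph preceding Claim~\ref{clm:noint}) that the coefficients of $z^0, \dotsc, z^T$ agree between \eqref{eq:fullz} and \eqref{eq:pureint}. Your write-up supplies more detail than the paper's two-sentence proof---in particular the explicit check that no exponent of the form $k + j t$ lies in $(T, t)$---but the structure is identical.
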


\begin{proof}
Our hypothesis is sufficient to use the cases $\oCd = 0, 1, \dotsc, \ourintT$ of \\
Claim~\ref{clm:noint}, so the $z^0, z^1, \dotsc, z^{\ourintT}$ terms in \eqref{eq:pureint} have zero coefficients.  Since the coefficients of these powers are the same in \eqref{eq:pureint} and \eqref{eq:fullz}, we see that these powers have zero coefficients in $f$.  
\end{proof}

To prove Claim~\ref{clm:noint}, we first describe the $z^{\oCd}$ term in \eqref{eq:pureint}.  To create the $(j, k)$-th term's contribution to the $z^{\oCd}$ coefficient, we collect all products of $\oCd$ distinctly-indexed terms of the form $(y_p - y_q)^2$, $(p, q) \in \setn^2 \setminus \bracepair{(j, k)}$, add them together, and then multiply by $c_j c_k$.  The total $z^{\oCd}$ coefficient is therefore

\begin{equation}
\sum_{j = 1}^n \sum_{k = 1}^n c_j {c_k} \left(\sum_{J \in \binom{\setn^2\setminus \bracepair{(j, k)}}{\oCd}} \prod_{(p, q) \in J} (y_p - {y_q})^2  \right).
\label{eq:zmcontrb}
\end{equation}


We would like to make the summation in $J$ independent of $j$ and $k$, so that we can move terms out of the sum in $j$ and $k$.  We get the following.

\begin{lem}\label{lem:rederiveone}
For $n \in \NN$, for any $\yvec = (y_1, \dotsc, y_n) \in \RR^n$, for any $(j, k) \in \setn^2$, and for any $\oCd \in \NN \cup \bracepair{0}$, $0 \leq \oCd \leq n^2- 1$, 
\begin{align}
\begin{aligned}
&\left( \sum_{J \in \binom{\setn^2\setminus \bracepair{(j, k)}}{\oCd}} \prod_{(p, q) \in J} (y_p - {y_q})^2  \right)\\
& =\sum_{v = 0}^{\oCd} (-1)^{v} (y_j - y_k)^{2v} \left(\sum_{J \in \binom{\setn^2}{\oCd-v}} \prod_{(p, q) \in J} (y_p -{y_q})^2  \right).
\end{aligned} \label{eq:univ}
\end{align}
\end{lem}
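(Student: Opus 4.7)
The plan is to recognize both sides of \eqref{eq:univ} as elementary symmetric functions in the values $(y_p - y_q)^2$ indexed by pairs in $\setn^2$ or $\setn^2 \setminus \bracepair{(j,k)}$, and then to derive the claimed expansion from a one-element deletion identity by induction on $\oCd$.

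First I would set up compact notation: for any subset $T \subseteq \setn^2$ and integer $\ell \geq 0$, let
$$E_\ell(T) := \sum_{J \in \binom{T}{\ell}} \prod_{(p,q) \in J} (y_p - y_q)^2,$$
with the convention $E_0(T) = 1$. In this notation, the left-hand side of \eqref{eq:univ} is $E_{\oCd}\bigl(\setn^2 \setminus \bracepair{(j,k)}\bigr)$, while the inner sum on the right-hand side is $E_{\oCd - v}(\setn^2)$.

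Next, I would record the deletion identity obtained by partitioning the $\ell$-element subsets of $\setn^2$ according to whether they contain the pair $(j,k)$: for every $1 \leq \ell \leq n^2$,
$$E_\ell(\setn^2) = E_\ell\bigl(\setn^2 \setminus \bracepair{(j,k)}\bigr) + (y_j - y_k)^2 \cdot E_{\ell - 1}\bigl(\setn^2 \setminus \bracepair{(j,k)}\bigr).$$
Writing $\alpha := (y_j - y_k)^2$ and $E'_\ell := E_\ell\bigl(\setn^2 \setminus \bracepair{(j,k)}\bigr)$, this rearranges to the recurrence $E'_\ell = E_\ell(\setn^2) - \alpha \, E'_{\ell - 1}$.

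Finally, I would unwind this recurrence by induction on $\oCd$. The base case $\oCd = 0$ is immediate, since $E'_0 = 1 = E_0(\setn^2)$. For the inductive step, substituting the hypothesis $E'_{\oCd - 1} = \sum_{v = 0}^{\oCd - 1}(-\alpha)^v E_{\oCd - 1 - v}(\setn^2)$ into the recurrence and reindexing the shifted sum yields
$$E'_{\oCd} = E_{\oCd}(\setn^2) + \sum_{v = 0}^{\oCd - 1}(-\alpha)^{v+1} E_{\oCd - 1 - v}(\setn^2) = \sum_{v = 0}^{\oCd}(-\alpha)^v E_{\oCd - v}(\setn^2),$$
which, after expanding $(-\alpha)^v = (-1)^v (y_j - y_k)^{2v}$, is exactly \eqref{eq:univ}. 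The hypothesis $\oCd \leq n^2 - 1$ ensures that every index set appearing in the recurrence is of allowable size. I do not anticipate a genuine obstacle here: the lemma is essentially the standard deletion identity for elementary symmetric polynomials, and the only care needed is to keep the index bookkeeping straight during the induction.
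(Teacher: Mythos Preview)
Your proof is correct and follows essentially the same route as the paper: both arguments induct on $\oCd$, both use the deletion identity $E_{\ell}(\setn^2) = E_{\ell}(\setn^2\setminus\{(j,k)\}) + (y_j-y_k)^2 E_{\ell-1}(\setn^2\setminus\{(j,k)\})$ to reduce to the previous case, and both finish by reindexing. The only difference is cosmetic: you isolate the deletion identity and the notation $E_\ell(T)$ up front, whereas the paper derives the identity inside the inductive step in words; the mathematical content is identical.
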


\begin{proof}
We prove the lemma by induction on $\oCd$.  For $\oCd = 0$, by the convention that the empty product is $1$, both sides are equal to 
$$\left( \sum_{J \in \bracepair{\emptyset}} \prod_{(p, q) \in \emptyset} (y_p -{y_q})^2 \right) = 1,$$
so the initial case holds.  

For the inductive argument, suppose that the statement is true for $\oCd = \mrep$; we wish to prove it for $\oCd = \mrep + 1$. We start with the left-hand side of \eqref{eq:univ} for $\oCd = \mrep + 1$,
$$\sum_{J \in \binom{\setn^2\setminus \bracepair{(j, k)}}{\mrep + 1}} \prod_{(p, q) \in J} (y_p -{y_q})^2,$$
and add and subtract terms so that, in the primary term, the sum in $J$ is independent of $j$ and $k$. The only missing terms are $(\mrep+1)$-fold products with distinct indices, in which one index is $(j, k)$; the other $m$ terms must therefore be members of $\setn^2 \setminus \bracepair{(j, k)}$.  Thus, we have

\begin{gather*}
\left(\sum_{J \in \binom{\setn^2\setminus \bracepair{(j, k)}}{\mrep + 1}} \prod_{(p, q) \in J} (y_p -{y_q})^2  \right) = \left(\sum_{J \in \binom{\setn^2}{\mrep + 1}} \prod_{(p, q) \in J} (y_p -{y_q})^2  \right) \\
 \quad \quad \qquad- (y_j - y_k)^2  \left(\sum_{J \in \binom{\setn^2\setminus \bracepair{(j, k)}}{\mrep}} \prod_{(p, q) \in J} (y_p -{y_q})^2  \right).
\end{gather*} 

Yet the second term above is $- (y_j - y_k)^2$ times the left-hand side of \eqref{eq:univ} (for $\oCd = \mrep$), so by the inductive hypothesis, we have
\begin{align}
\begin{aligned}
&\left(\sum_{J \in \binom{\setn^2}{\mrep + 1}} \prod_{(p, q) \in J} (y_p -{y_q})^2  \right) \\
&\quad + \sum_{v = 0}^{\mrep} (-1)^{v + 1} (y_j - y_k)^{2 (v + 1)}\left(\sum_{J \in \binom{\setn^2}{\mrep -v}} \prod_{(p, q) \in J} (y_p -{y_q})^{2}  \right).
\end{aligned} \label{eq:interim} 
\end{align}

Note that $\disp  \left(\sum_{J \in \binom{\setn^2}{\mrep + 1}} \prod_{(p, q) \in J} (y_p -{y_q})^2  \right)$ \vspace{3pt} becomes the $v = -1$ term of the series in  \eqref{eq:interim}.  Then shifting the index by $1$, we have finished the inductive step.
\end{proof}

By Lemma~\ref{lem:rederiveone}, the coefficient of $z^m$ in \eqref{eq:pureint}, namely \eqref{eq:zmcontrb}, becomes
\begin{align}
\sum_{j = 1}^n \sum_{k = 1}^n c_j {c_k} \left( \sum_{v = 0}^{\oCd} (-1)^{v} (y_j - y_k)^{2v} \left(\sum_{J \in \binom{\setn^2}{\oCd -v}} \prod_{(p, q) \in J} (y_p -{y_q})^2  \right) \right)\\
= \sum_{v = 0}^{\oCd} \left(\left(\sum_{J \in \binom{\setn^2}{\oCd -v}} \prod_{(p, q) \in J} (y_p -{y_q})^2  \right) \sum_{j = 1}^n \sum_{k = 1}^n (-1)^{v} c_j {c_k} (y_j - y_k)^{2 v} \right). \label{eq:intfinal}
\end{align}

Therefore, we see that the coefficient of $z^m$ in \eqref{eq:pureint} is a linear combination of the terms $\disp \sum_{j = 1}^n \sum_{k = 1}^n (-1)^{v} c_j c_k (y_j - y_k)^{2 v}$.  To control these terms, we use the following lemma.

\begin{lem}
For $v\in \NN \cup \bracepair{0}$ and $n \in \NN$, if $\yvec = (y_1, \dotsc, y_n) \in \RR^n$ and $\cvec = (c_1, \dotsc, c_n) \in \RR^n$ satisfy  
\begin{equation} \label{eq:revisedsize}
\sum_{j = 1}^n c_j y_j^u = 0, \quad \text{ for all } u \in \bracepair{0, 1, \dotsc, v},
\end{equation}
then 
\begin{equation}(-1)^{v} \sum_{j = 1}^n \sum_{k = 1}^n c_j {c_k} (y_j - y_k)^{2 v} = 0.
\label{eq:criteq}
\end{equation}
\label{lem:negsums}
\end{lem}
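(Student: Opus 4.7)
The plan is to expand the inner factor $(y_j - y_k)^{2v}$ by the binomial theorem to separate the $j$ and $k$ variables, and then use the hypothesis \eqref{eq:revisedsize} to show that every resulting term vanishes. Specifically, writing
\[
(y_j - y_k)^{2v} = \sum_{i = 0}^{2v} \binom{2v}{i} (-1)^{2v - i} y_j^{\,i}\, y_k^{\,2v - i},
\]
and substituting into \eqref{eq:criteq}, the double sum factors as
\[
\sum_{j = 1}^n \sum_{k = 1}^n c_j c_k (y_j - y_k)^{2v} = \sum_{i = 0}^{2v} \binom{2v}{i} (-1)^{2v - i} \left( \sum_{j = 1}^n c_j y_j^{\,i} \right) \left( \sum_{k = 1}^n c_k y_k^{\,2v - i} \right).
\]

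The key observation is then a simple pigeonhole argument on the exponents: for each index $i \in \{0, 1, \dotsc, 2v\}$, either $i \leq v$ or $2v - i \leq v$, since if both $i > v$ and $2v - i > v$, then $2v = i + (2v - i) > 2v$, a contradiction. In the first case, the hypothesis \eqref{eq:revisedsize} applied with $u = i$ gives $\sum_j c_j y_j^{\,i} = 0$; in the second case, it applied with $u = 2v - i$ gives $\sum_k c_k y_k^{\,2v - i} = 0$. Either way, the $i$-th summand is zero. Multiplying the resulting identity by $(-1)^v$ yields \eqref{eq:criteq}.

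There is no significant obstacle here: the argument is essentially the standard trick that a polynomial identity $\sum c_j P(y_j, y_j') = 0$ follows from vanishing of sufficiently many moments once $P$ is expanded into a finite linear combination of separated monomials. The only point requiring a moment's care is verifying that the exponents $i$ and $2v - i$ cannot both exceed $v$, which is immediate.
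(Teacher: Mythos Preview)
Your proof is correct and is essentially identical to the paper's own argument: expand $(y_j - y_k)^{2v}$ by the binomial theorem, factor the double sum into a product of moment sums, and observe that for each term at least one of the exponents $i$, $2v-i$ is at most $v$ so that \eqref{eq:revisedsize} kills it.
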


\begin{proof}
We use the binomial formula to expand the desired expression.
\begin{align*}
& (-1)^v \sum_{j = 1}^n \sum_{k = 1}^n c_j {c_k} (y_j - y_k)^{2 v} \\
 = \,&(-1)^v \sum_{j = 1}^n \sum_{k = 1}^n c_j {c_k} \sum_{w = 0}^{2v} \binom{2v}{w}(y_j)^w ( - y_k)^{2v - w}\\
 = \,& \sum_{w = 0}^{2v} (-1)^{ v + w} \binom{2v}{w} \left( \sum_{j = 1}^n c_j y_j^w \right) \cdot \left( \sum_{k = 1}^n c_k y_k^{2v - w} \right).
\end{align*}

If $w \leq v$, then by \eqref{eq:revisedsize},  $\disp \sum_{j = 0}^n c_j y_j^w = 0$, and the sum in $j$ becomes zero.  Similarly, if $w \geq v$, then $2v - w \leq v$, so
$\disp \sum_{k = 0}^n c_k y_k^{2v - w} = 0$, and the sum in $k$ becomes zero.  Therefore, all terms become zero.
\end{proof}

We now are able to finish the proof of Claim~\ref{clm:noint}.

\begin{proof}[Proof of Claim~\ref{clm:noint}]
Fix $n \in \NN$, and let $\oCd \in \NN \cup \bracepair{0}$, $0 \leq \oCd \leq n^2 - 1$.  Suppose that $\yvec = (y_1, \dotsc, y_n) \in \RR^n$ and $\cvec = (c_1, \dotsc, c_n) \in \RR^n$ satisfy the following statements:
\begin{equation}
\sum_{j = 1}^n c_j y_j^{\ell} = 0 \text{ for all } \ell \in \bracepair{0, 1, \dotsc, \oCd}.
\label{eq:hypoth}
\end{equation}

By \eqref{eq:intfinal}, the coefficient of $z^{\oCd}$ in  \eqref{eq:pureint} is
\[
 \sum_{v = 0}^{\oCd} \left(\left(\sum_{J \in \binom{\setn^2}{\oCd -v}} \prod_{(p, q) \in J} (y_p -{y_q})^2  \right) \sum_{j = 1}^n \sum_{k = 1}^n (-1)^{v} c_j {c_k} (y_j - y_k)^{2 v} \right). \]
Yet by $v \leq \oCd$, the statements in \eqref{eq:hypoth} assure us that $$\sum_{j = 1}^n c_j y_j^{u} = 0$$
 for all $u \in \bracepair{0, 1, \dotsc, v}$.  Invoking Lemma~\ref{lem:negsums}, we see that
$$ (-1)^{v} \sum_{j = 1}^n \sum_{k = 1}^n c_j {c_k} (y_j - y_k)^{2 v} = 0,$$
and hence the $v$th term becomes zero.  This works for all $v$, $0 \leq v \leq \oCd$, so the coefficient of $z^{\oCd}$ in \eqref{eq:pureint} becomes zero.  
\end{proof}

We now choose $n$, $\yvec$, and $\cvec$ satisfying the hypothesis of Claim~\ref{clm:noint} for arbitrarily large, but fixed, $\oCd$.  Although a dimension-counting argument would suffice, there is a simple choice coming from combinatorics.
\begin{prop}[\cite{Lovasz}, Exercise 1.2.4]
Fix $\oCd \in \NN \cup \bracepair{0}$, $n := \oCd+ 2$, and for $1 \leq j \leq n$, let $y_j := j - 1$ and $\disp c_j := (-1)^{j-1} \binom{\oCd + 1}{j-1}$.  Then for all $\ell \in \bracepair{0, 1, \dotsc, \oCd}$,
$$\sum_{j = 1}^n c_j y_j^{\ell} = 0.$$ 
\label{prop:comb}
(Notice that the indices are shifted from \cite{Lovasz}).
\end{prop}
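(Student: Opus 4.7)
The plan is to reindex the sum and reduce the claim to a classical finite-difference identity. First I would substitute $i = j - 1$ throughout, so that the desired identity $\sum_{j=1}^{n} c_j y_j^\ell = 0$ becomes
\[
\sum_{i=0}^{\oCd+1} (-1)^i \binom{\oCd+1}{i} i^\ell = 0, \qquad 0 \leq \ell \leq \oCd,
\]
under the usual convention $0^0 = 1$. The $\ell = 0$ case is then immediate from the binomial theorem, since the sum collapses to $(1 - 1)^{\oCd + 1} = 0$, so only the cases $1 \leq \ell \leq \oCd$ require a separate argument.

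For the remaining range I would use the forward-difference operator $\Delta$, defined by $(\Delta p)(x) = p(x+1) - p(x)$. Iterating produces the standard expansion
\[
(\Delta^{\oCd+1} p)(0) = \sum_{i=0}^{\oCd+1} (-1)^{\oCd+1-i} \binom{\oCd+1}{i} p(i).
\]
Each application of $\Delta$ lowers the degree of a polynomial by one, so $\Delta^{\oCd+1} p \equiv 0$ whenever $\deg p \leq \oCd$. Applying this to $p(x) = x^\ell$ with $\ell \leq \oCd$ and then multiplying through by $(-1)^{\oCd+1}$ yields the reindexed identity above. As a parenthetical alternative route, one could expand $(e^x - 1)^{\oCd+1} = \sum_{i=0}^{\oCd+1}(-1)^{\oCd+1-i} \binom{\oCd+1}{i} e^{ix}$ and equate Taylor coefficients of $x^\ell/\ell!$: the left-hand side has a Taylor series beginning at $x^{\oCd+1}$, forcing the coefficient to vanish for every $\ell \leq \oCd$.

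There is no substantive obstacle here — this is literally Exercise 1.2.4 of Lovász, cited as such by the author, and the underlying identity is a textbook fact. The only bookkeeping to be careful about is the convention $0^0 = 1$ that is needed at the $\ell = 0$ boundary, and tracking the factor $(-1)^{j-1}$ through the shift $i = j - 1$ so that no overall sign is lost when matching with the finite-difference formula.
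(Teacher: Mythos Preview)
Your proof is correct. The paper does not actually supply its own argument for this proposition: it simply quotes the identity as Exercise~1.2.4 of Lov\'asz (after shifting the index of summation by one), so there is nothing to compare against beyond that citation. Your reindexing $i = j-1$ recovers exactly the form stated in Lov\'asz, and the finite-difference argument you give --- that $\Delta^{m+1}$ annihilates every polynomial of degree at most $m$, together with the expansion $(\Delta^{m+1}p)(0) = \sum_{i=0}^{m+1}(-1)^{m+1-i}\binom{m+1}{i}p(i)$ --- is the standard textbook proof of that exercise. The generating-function variant you mention is an equally valid alternative. Your remarks about the $0^0=1$ convention and the sign bookkeeping are the only places one could slip, and you have handled both correctly.
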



This allows us to fulfill the conditions of Claim~\ref{clm:noint} for any $\oCd$.  In particular, to fulfill the conditions of Corollary~\ref{cor:point}, we use the following case of Proposition~\ref{prop:comb}.

\begin{cor}
A set of solutions to \eqref{eq:pointcond} is given by $n := \ourintT + 2$, $\yvec = (0, 1, \dotsc, \ourintT + 1)$, and $\disp \cvec = \left( (-1)^{j - 1} \binom{\ourintT + 1}{j - 1} \right)_{j = 1}^{\ourintT + 2} $.  
\label{cor:sample}
\end{cor}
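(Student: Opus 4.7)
The plan is essentially a direct substitution into Proposition~\ref{prop:comb}. The statement of Proposition~\ref{prop:comb} is already tailored to produce exactly the type of vanishing we need in \eqref{eq:pointcond}; the only thing Corollary~\ref{cor:sample} does is specialize the parameter $\oCd$ to the integer part $\ourintT$ of $\ourvarT$, and then translate the formulas into the vector notation used in \eqref{eq:pointcond}.

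More explicitly, I would take $\oCd = \ourintT$ in Proposition~\ref{prop:comb}. This forces $n = \ourintT + 2$, and defines $y_j = j - 1$ for $1 \leq j \leq \ourintT + 2$, which is exactly the vector $\yvec = (0, 1, \dotsc, \ourintT + 1)$ claimed in the corollary. Similarly, setting $c_j = (-1)^{j-1}\binom{\ourintT + 1}{j - 1}$ yields the coefficient vector $\cvec$ as stated. Proposition~\ref{prop:comb} then gives us $\sum_{j=1}^{n} c_j y_j^{\ell} = 0$ for every $\ell \in \{0, 1, \dotsc, \ourintT\}$, which is precisely the statement \eqref{eq:pointcond}.

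There is no real obstacle here: the combinatorial identity has been quoted in the form best suited to this application, so the proof is a one-line specialization. If I wanted to, I could instead reprove the identity from scratch using the standard finite-difference argument, writing $\sum_{j=0}^{\ourintT + 1}(-1)^j \binom{\ourintT + 1}{j} j^{\ell}$ as $(\Delta^{\ourintT + 1} p)(0)$ for $p(x) = x^{\ell}$, which vanishes because $p$ has degree $\ell \leq \ourintT < \ourintT + 1$; but since Proposition~\ref{prop:comb} is already available, the cleanest proof is just to cite it.
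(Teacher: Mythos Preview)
Your proposal is correct and matches the paper's approach exactly: the paper presents Corollary~\ref{cor:sample} as an immediate specialization of Proposition~\ref{prop:comb} with $\oCd = \ourintT$, giving no separate proof. Your optional finite-difference remark is a nice bonus but unnecessary here.
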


We will now show that the choices of $n$, $\yvec$, and $\cvec$ as above are sufficient to ensure that if $\ourvarT \in \bigcup_{k \in \NN} (2k-1, 2k)$, then the $z^{\ourvarT}$ coefficient is negative.

\subsection{Analyzing the coefficient of \texorpdfstring{$z^{\ourvarT}$ in $f$ }{z-super-t in f}} \label{subsect:tpows}

If $\ourvarT \not\in \NN$, we note that the coefficient of $z^{\ourvarT}$ in \eqref{eq:fullpowers} comes solely from products of one degree-$\ourvarT$ term and $n^2 - 2$ degree-$0$ terms in \eqref{eq:fullz}; no integer equals $\ourvarT$, so no product of degree-$0$ and degree-$1$ terms works, and the only other combination of terms with nonzero, but small enough, degree is one degree-$\ourvarT$ term and $n^2 - 2$ degree-$0$ terms.   Therefore, for all $\ourvarT > 0$, $a > 0$, the  coefficient of $z^{\ourvarT}$ in \eqref{eq:fullpowers} is $$\sum_{j = 1}^n \sum_{k = 1}^n c_j {c_k} \left( \sum_{\substack{ (p, q) \in \setn^2 \\ (p, q) \neq (j, k) }} a (y_p^2 + y_q^2)^{\ourvarT} \right).$$ 

As before, we attempt to make the summation independent of $(j, k)$ by adding and subtracting the $(j, k)$-th term in the sum, so the coefficient of $z^{\ourvarT}$ becomes

\begin{align*}
& \sum_{j = 1}^n \sum_{k = 1}^n a c_j {c_k}  \left(\left( \sum_{(p, q) \in \setn^2}  (y_p^2 + y_q^2)^{\ourvarT} \right) - (y_j^2 + y_k^2)^{\ourvarT} \right)  \\
& =  a \left( \sum_{(p, q) \in \setn^2}  (y_p^2 + y_q^2)^{\ourvarT} \right) \parenpair{ \sum_{j = 1}^n c_j }^2 - a  \sum_{j = 1}^n \sum_{k = 1}^n c_j {c_k} (y_j^2 + y_k^2)^{\ourvarT} .\\
\end{align*}

Now, assume that $\ourvarT \not\in \NN$ and that the values $\bracepair{y_j}_{j = 1}^n$ and $\bracepair{c_j}_{j = 1}^n$ are chosen so that the conditions in \eqref{eq:pointcond} hold.  Then in particular, $\disp \sum_{j = 1}^n c_j = 0$.  Hence, the first term becomes zero, and we are left with 

\[
- a  \sum_{j = 1}^n \sum_{k = 1}^n c_j {c_k} (y_j^2 + y_k^2)^{\ourvarT} .
\]

Our objective will be to show the following.
\begin{bigclm} Fix $\ourvarT > 0$, $\ourvarT \not\in \NN$, and write $\ourvarT = \ourintT + \ourfracT$ with $\ourintT \in \NN \cup \bracepair{0}$ and $\ourfracT \in (0, 1)$.  Fix $a > 0$.  For any $n \in \NN$, $\yvec = (y_1, \dotsc, y_n) \in \RR^n$ and $\cvec = (c_1, \dotsc, c_n) \in \RR^n$ such that the conditions in \eqref{eq:pointcond} hold,
\begin{equation}
\disp - a  \sum_{j = 1}^n \sum_{k = 1}^n c_j {c_k} (y_j^2 + y_k^2)^{\ourvarT}  
\label{eq:ztone}
\end{equation} 
 is nonnegative if $\ourintT$ is even, and is nonpositive if $\ourintT$ is odd.  Moreover, for any such $\ourvarT$ and $a$, a choice of $n$, $\yvec$ and $\cvec$ can be found such that the conditions in \eqref{eq:pointcond} hold and, in addition, \eqref{eq:ztone} is nonzero.  
\label{clm:zt}
\end{bigclm}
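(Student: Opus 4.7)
The plan is to represent $(y_j^2 + y_k^2)^{\ourvarT}$ as an integral superposition of products $e^{-\lambda y_j^2}e^{-\lambda y_k^2}$, so that the double sum against $c_j c_k$ collapses into a squared single sum. The main tool is the identity
\[
s^{\ourvarT} = \frac{1}{\Gamma(-\ourvarT)} \int_{0}^{\infty} \left( e^{-\lambda s} - \sum_{k=0}^{\ourintT} \frac{(-\lambda s)^k}{k!} \right) \lambda^{-\ourvarT - 1}\, d\lambda, \qquad s \geq 0,
\]
valid for $\ourvarT \in (\ourintT, \ourintT + 1)$.  The integral converges at both endpoints: near $\lambda = 0$ the bracketed Taylor remainder is $O(\lambda^{\ourintT + 1})$ so the integrand is $O(\lambda^{\ourintT - \ourvarT})$, which is integrable because $\ourintT - \ourvarT > -1$; near $\lambda = \infty$ the dominant polynomial piece is $O(\lambda^{\ourintT - \ourvarT - 1})$, integrable since $\ourvarT > \ourintT$.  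I would establish the identity by induction on $\ourintT$, with a single integration by parts reducing the $\ourintT$ case to the $\ourintT - 1$ case, bottoming out at the classical $\ourintT = 0$ formula $s^{\ourvarT} = \Gamma(-\ourvarT)^{-1}\int_0^\infty (e^{-\lambda s} - 1)\lambda^{-\ourvarT - 1}\, d\lambda$ (verified by a change of variable and a Gamma-function integration by parts).

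Next, I would substitute $s = y_j^2 + y_k^2$ into the identity, multiply by $c_j c_k$, and sum.  The exponential part factors completely,
\[
\sum_{j,k} c_j c_k e^{-\lambda(y_j^2 + y_k^2)} = \Bigl(\sum_{j} c_j e^{-\lambda y_j^2}\Bigr)^2 \geq 0,
\]
and for each $m \in \{0, 1, \dots, \ourintT\}$ the subtracted monomial contributes
\[
\sum_{j,k} c_j c_k (y_j^2 + y_k^2)^m = \sum_{r=0}^{m} \binom{m}{r} \Bigl(\sum_j c_j y_j^{2r}\Bigr)\Bigl(\sum_k c_k y_k^{2(m-r)}\Bigr).
\]
Since $r + (m-r) = m \leq \ourintT$, at least one of $2r$, $2(m-r)$ is $\leq m \leq \ourintT$, so \eqref{eq:pointcond} kills every summand.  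Consequently
\[
\sum_{j,k} c_j c_k (y_j^2 + y_k^2)^{\ourvarT} = \frac{1}{\Gamma(-\ourvarT)} \int_0^\infty \Bigl(\sum_j c_j e^{-\lambda y_j^2}\Bigr)^2 \lambda^{-\ourvarT - 1}\, d\lambda.
\]
The integral on the right is a nonnegative real number, while $\operatorname{sign}\Gamma(-\ourvarT) = (-1)^{\ourintT + 1}$ on $(\ourintT, \ourintT + 1)$ from the alternating sign of $\Gamma$ across its negative-integer poles.  Therefore $-a$ times the double sum has sign $(-1)^{\ourintT}$, i.e.\ nonnegative when $\ourintT$ is even and nonpositive when $\ourintT$ is odd.

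For the strict-sign conclusion, I would specialize to the parameters of Corollary~\ref{cor:sample} and show that $h(\lambda) := \sum_j c_j e^{-\lambda y_j^2}$ is not identically zero.  Its Maclaurin expansion has $\lambda^m$-coefficient $\frac{(-1)^m}{m!}\sum_j c_j y_j^{2m}$, which vanishes for $2m \leq \ourintT$ by \eqref{eq:pointcond}.  The first candidate nonzero coefficient occurs at $m^{\ast} := \lfloor \ourintT / 2 \rfloor + 1$, for which $2m^{\ast} \geq \ourintT + 1$ in either parity.  With $y_j = j - 1$ and $c_j = (-1)^{j-1}\binom{\ourintT + 1}{j-1}$, the sum $\sum_j c_j y_j^{2m^{\ast}}$ equals (up to sign) the $(\ourintT + 1)$-fold forward difference of $i \mapsto i^{2m^{\ast}}$ at $i = 0$, which is $(\ourintT + 1)!\, S(2m^{\ast}, \ourintT + 1)$ in terms of the Stirling number of the second kind, and this is strictly positive since $2m^{\ast} \geq \ourintT + 1$.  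Hence $h \not\equiv 0$, the integral in the previous display is strictly positive, and \eqref{eq:ztone} has the claimed strict sign.

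The main obstacle is the careful setup of the integral representation, in particular verifying convergence simultaneously at both endpoints (the Taylor remainder and polynomial pieces are individually non-integrable near the \emph{other} endpoint) and pinning down $\operatorname{sign}\Gamma(-\ourvarT)$.  Once the representation is established, the remaining steps are binomial bookkeeping and a standard Stirling-number identity, both of which should be routine.
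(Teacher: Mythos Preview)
Your proposal is correct and follows essentially the paper's route: the same integral representation for $s^{\ourvarT}$ (the paper writes the constant as $(-1)^{\ourintT}\risingfac{\ourfracT}{\ourintT+1}/\Gamma(1-\ourfracT)$, which equals your $1/\Gamma(-\ourvarT)$ after flipping the sign of the bracketed integrand), the same binomial cancellation of the polynomial contributions under \eqref{eq:pointcond}, and the same factorization of the exponential piece into the nonnegative square $\bigl(\sum_j c_j e^{-\lambda y_j^2}\bigr)^2$. The one deviation is the strict-sign step: the paper shows $\sum_j c_j e^{-\lambda y_j^2}\not\equiv 0$ by an elementary limit as $\lambda\to\infty$ (linear independence of exponentials with distinct nonnegative rates), whereas you locate the first nonzero Maclaurin coefficient via the forward-difference/Stirling identity $\Delta^{\ourintT+1}i^{2m^\ast}\big|_{i=0}=(\ourintT+1)!\,S(2m^\ast,\ourintT+1)>0$; both arguments are valid and short, and yours additionally pins down the exact order of vanishing at $\lambda=0$.
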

To proceed, we use a convenient integral representation for positive noninteger powers, such as the $t$-th power above.
\begin{lem}
If $\Re w > 0$ and $\ourvarR > 0$, $\ourvarR \not\in \NN$, then letting $\ourvarR  = \ourintR + \ourfracR$, where $\ourintR \in \NN \cup \bracepair{0}$, $\ourfracR \in (0, 1)$,
\begin{align}
\begin{aligned}
w^{\ourvarR } &= h(w; \ourvarR ) \\
 &:=(-1)^{\ourintR} \frac{\risingfac{\ourfracR}{\ourintR + 1}}{\Gamma(1 - \ourfracR )}  \int_0^{\infty} \left(\left( \sum_{\ell = 0}^{\ourintR} \frac{(-\lambda w)^{\ell}}{\ell !} \right) - e^{-\lambda w} \right)\,  \frac{d\lambda}{\lambda^{\ourintR + 1 + \ourfracR}}.
\end{aligned}\label{eq:wt}
\end{align}
\label{lem:wt}
\end{lem}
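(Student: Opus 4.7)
The plan is to establish the identity first for $w > 0$ real by induction on $\ourintR$ together with integration by parts, and then extend to the right half-plane $\{\Re w > 0\}$ by analytic continuation.

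First I would verify that the integrand on the right-hand side is absolutely integrable whenever $\Re w > 0$. Near $\lambda = 0$, the Taylor expansion of $e^{-\lambda w}$ cancels the polynomial $\sum_{\ell = 0}^{\ourintR} (-\lambda w)^\ell / \ell!$ through order $\lambda^{\ourintR}$, so the integrand behaves like $O(\lambda^{-\ourfracR})$, which is integrable since $0 < \ourfracR < 1$. At infinity, the polynomial piece is $O(\lambda^{\ourintR})$, so after dividing by $\lambda^{\ourintR + 1 + \ourfracR}$ one gets $O(\lambda^{-1 - \ourfracR})$, while the $e^{-\lambda w}$ piece decays exponentially. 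Morera's theorem (or, equivalently, differentiation under the integral sign) then shows that $h(w; \ourvarR)$ is holomorphic on $\{\Re w > 0\}$. Since the principal branch of $w^{\ourvarR}$ is also holomorphic there, it suffices to verify the identity on the positive real axis.

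Second, for $w > 0$ real, the substitution $\mu = \lambda w$ pulls a factor of $w^{\ourvarR}$ outside the integral and reduces the claim to the single integral identity
\[
J_{\ourintR} := \int_0^\infty \left( \sum_{\ell = 0}^{\ourintR} \frac{(-\mu)^\ell}{\ell!} - e^{-\mu} \right) \frac{d\mu}{\mu^{\ourintR + 1 + \ourfracR}} = (-1)^{\ourintR} \, \frac{\Gamma(1 - \ourfracR)}{\risingfac{\ourfracR}{\ourintR + 1}}.
\]

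I would then prove this by induction on $\ourintR$. The base case $\ourintR = 0$ is the classical identity $\int_0^\infty (1 - e^{-\mu}) \mu^{-1 - \ourfracR} \, d\mu = \Gamma(1 - \ourfracR)/\ourfracR$, which follows from one integration by parts against $d(\mu^{-\ourfracR}/(-\ourfracR))$ together with the definition of the gamma function. For the induction step I would use the identity
\[
\frac{d}{d\mu} \left( \sum_{\ell = 0}^{\ourintR} \frac{(-\mu)^\ell}{\ell!} - e^{-\mu} \right) = -\left( \sum_{\ell = 0}^{\ourintR - 1} \frac{(-\mu)^\ell}{\ell!} - e^{-\mu} \right),
\]
integrate by parts against $d(\mu^{-\ourintR - \ourfracR}/(-\ourintR - \ourfracR))$, and check that the boundary terms vanish (the bracketed expression is $O(\mu^{\ourintR + 1})$ as $\mu \to 0^+$ and $O(\mu^{\ourintR})$ as $\mu \to \infty$, and in both regimes multiplication by $\mu^{-\ourintR - \ourfracR}$ sends the product to $0$). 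This yields the recurrence $J_{\ourintR} = -J_{\ourintR - 1}/(\ourintR + \ourfracR)$; iterating and collecting the factors in the denominator into $\risingfac{\ourfracR}{\ourintR + 1}$ finishes the induction.

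The main obstacle is purely bookkeeping: verifying the convergence estimates and the vanishing of boundary terms uniformly as $\ourintR$ grows, so that both the integration-by-parts recurrence and the analytic-continuation step are rigorously justified. No deeper analytic difficulty is anticipated, since the entire argument reduces to the elementary $\ourintR = 0$ gamma-function identity.
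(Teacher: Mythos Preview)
Your proposal is correct, and it takes a genuinely different route from the paper's own proof.

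The paper also argues by induction on $\ourintR$ with the same $\ourintR = 0$ base case, but the inductive step is organized differently. Rather than reducing to real $w$ and eliminating $w$ by the substitution $\mu = \lambda w$, the paper works directly for complex $w$ and differentiates both sides of \eqref{eq:wt} in $w$: it checks that $\frac{d}{dw} h(w; \thisq) = \thisq\, h(w; \thisp)$ (where $\thisq = (\ourintR+1)+\ourfracR$ and $\thisp = \ourintR + \ourfracR$), so by the inductive hypothesis both $w^{\thisq}$ and $h(w; \thisq)$ have derivative $\thisq\, w^{\thisp}$, hence differ by a constant, which is shown to vanish by letting $w \to 0^+$. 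To justify differentiation under the integral sign and the limit $h(w;\thisq)\to 0$, the paper first proves a separate bound $\int_0^\infty \bigl|\,\cdot\,\bigr|\, d\lambda/\lambda^{\ourintR+1+\ourfracR} \leq C(\ourvarR)\,|w|^{\ourvarR}$ (its Lemma~\ref{lem:wbound}), obtained via the change of variables $\mu = |w|\lambda$.

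Your approach is arguably more streamlined: once the substitution $\mu = \lambda w$ has been made on the real axis, there is no $w$ left, and the recurrence $J_{\ourintR} = -J_{\ourintR-1}/(\ourintR+\ourfracR)$ follows from a single integration by parts in the integration variable, with no auxiliary $L^1$-bound lemma needed beyond the convergence check you already sketched. The paper's approach, on the other hand, handles complex $w$ directly without invoking a separate analytic-continuation step. The two arguments are close cousins---both ultimately rest on the same homogeneity and the same telescoping derivative $\frac{d}{d\mu}\bigl(\sum_{\ell=0}^{\ourintR}(-\mu)^{\ell}/\ell! - e^{-\mu}\bigr) = -\bigl(\sum_{\ell=0}^{\ourintR-1}(-\mu)^{\ell}/\ell! - e^{-\mu}\bigr)$---but they package the bookkeeping differently.
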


Note that \eqref{eq:wt} also holds when $w = 0$, if we define $0^{\ourvarR} = 0$.

The basic idea of the formula is not original: for example, Berg, Christensen, and Ressel present the formula for $0 < \ourvarR < 1$ in the proof of \cite{BCR}, Proposition 3.2.10, and then proceed to derive the formula for $1 < \ourvarR < 2$ in the proof of \cite{BCR}, Proposition 3.2.11.  \eqref{eq:wt} simply continues the pattern further.  As such, we defer the details of the proof of Lemma~\ref{lem:wt} to Section~\ref{sec:appendix}, and proceed to demonstrate the validity of Claim~\ref{clm:zt}, given the integral representation.  

\begin{proof}[Proof of Claim~\ref{clm:zt}]  Let $n \in \NN$, $\yvec = (y_1, \dotsc, y_n) \in \RR^n$, and $\cvec = (c_1, \dotsc, c_n) \in \RR^n$ satisfy \eqref{eq:pointcond}.  Then applying Lemma~\ref{lem:wt} to $\ourvarR = \ourvarT$ and $w = y_j^2 + y_k^2$, letting $\ourvarT = \ourintT + \ourfracT$\vspace{6 pt}, and letting  $B(\ourvarT)$  denote the positive constant 
$\disp \frac{\risingfac{\ourfracT}{\ourintT + 1}}{\Gamma(1 - \ourfracT )} $, we have that
\begin{align*}
& - a  \sum_{j = 1}^n \sum_{k = 1}^n c_j {c_k} (y_j^2 + y_k^2)^{\ourvarT}\\
& = (-1)^{\ourintT + 1} \, a  \, B(\ourvarT) \sum_{j = 1}^n \sum_{k = 1}^n c_j {c_k} \, \cdot \\
& \quad \quad \cdot \int_0^{\infty} \left( \sum_{\ell = 0}^{\ourvarT} \left( \frac{(-\lambda )^{\ell}(y_j^2 + y_k^2)^{\ell}}{\ell !} \right) - e^{-\lambda (y_j^2 + y_k^2)} \right)\,  \frac{d\lambda}{\lambda^{\ourintT + 1 + \ourfracT}} .
\end{align*}
Interchanging the sum in $j$ and $k$ and the integral, we get
\begin{align*}
(-1)^{\ourintT + 1} a\,  B(\ourvarT)   & \left( \int_0^{\infty}  \left( \sum_{\ell = 0}^{\ourintT} \frac{(-\lambda)^{\ell}}{\ell !}  \sum_{j = 1}^n \sum_{k = 1}^n c_j {c_k} (y_j^2 + y_k^2)^{\ell}  \, \right. \right. \\
&  \left. \left. \,  \quad  \quad \quad \quad \, -   \sum_{j = 1}^n \sum_{k = 1}^n c_j {c_k} e^{-\lambda (y_j^2 + y_k^2)} \,  \right) \frac{d\lambda}{\lambda^{\ourintT + 1 + \ourfracT}}   \right) .
\end{align*}
We now prove that if \eqref{eq:pointcond} holds, the term $\disp  \sum_{j = 1}^n \sum_{k = 1}^n c_j {c_k} (y_j^2 + y_k^2)^{\ell} $ zeroes out for all $\ell \in \bracepair{0, 1, \dotsc, \ourintT}$, and hence the initial terms of the integrand become zero.  We use the binomial formula to rewrite the sums, and we get, for the $\ell$-th double-sum,
\begin{align*}
&\sum_{j = 1}^n \sum_{k = 1}^n c_j {c_k} (y_j^2 + y_k^2)^{\ell}\\
& =\sum_{j = 1}^n \sum_{k = 1}^n c_j {c_k} \sum_{u = 0}^{\ell} \binom{\ell}{u} (y_j)^{2u} \cdot (y_k)^{2(\ell - u)}\\
&=  \sum_{u = 0}^{\ell} \binom{\ell}{u} \parenpair{ \sum_{j = 1}^n c_j y_j^{2u}} \parenpair{ \sum_{k = 1}^n c_j y_k^{2(\ell - u)}} .
\end{align*}
Since $\ell \leq \ourintT$, either $2u \leq \ourintT$ or $2(\ell - u) \leq \ourintT$, since $2u + 2(\ell - u) = 2 \ell \leq 2 \ourintT$.  Therefore, either the sum in $j$ or the sum in $k$ is 0, by \eqref{eq:pointcond}.  Therefore, the $\ell$th term becomes zero. Since this works for all $\ell$, $0 \leq \ell \leq \ourintT$, the initial terms of the integrand become zero.

Therefore, the expression in \eqref{eq:ztone} becomes
\[
 (-1)^{\ourintT + 1}\,  a\,B(\ourvarT) \left( - \int_0^{\infty}  \sum_{j = 1}^n \sum_{k = 1}^n c_j {c_k} e^{-\lambda (y_j^2 + y_k^2)} \,  \frac{d\lambda}{\lambda^{\ourintT + 1 + \ourfracT}}   \right);
\]
after simplifying, we have
\[
(-1)^{\ourintT} a \,B(\ourvarT) \left( \int_0^{\infty} \left(\sum_{j = 1}^n c_j e^{- \lambda y_j^2} \right)^2 \frac{d\lambda}{\lambda^{\ourintT + 1 + \ourfracT}}  \right).
\]

Since all terms are real, the integrand is clearly nonnegative, and $B(\ourvarT) = \disp \frac{\risingfac{\ourfracT}{\ourintT + 1}}{\Gamma(1 - \ourfracT )}$ is positive.  Therefore, the sign of the expression in \eqref{eq:ztone} depends only on $(-1)^{\ourintT}$, so it is nonnegative if $\ourintT$ is even and nonpositive if $\ourintT$ is odd.  

To ensure that strict positivity in the integral can occur, we note that the integral is strictly positive unless for Lebesgue-a.e. $\lambda > 0$,
\begin{equation}
\sum_{j = 1}^n c_j e^{- \lambda y_j^2} = 0. \label{eq:zerocond}
\end{equation}

By continuity in $\lambda$, we may assert that the integral is strictly positive unless \eqref{eq:zerocond} holds for all $\lambda > 0$.  Now, we do not immediately have that if \eqref{eq:zerocond} holds for all $\lambda > 0$, $c_j = 0$ for all $j$; for example, take $n = 2$, $y_2 = -y_1$, $c_2 = -c_1$.  The following statement, however, gives a sufficient condition to draw such a conclusion.
\begin{lem}
If $\yvec = (y_1, \dotsc, y_n) \in \RR^n$ is a set of distinct nonnegative real numbers, and if \eqref{eq:zerocond} holds for all $\lambda > 0$, then $c_j = 0$ for all $j$. \label{lem:biglinind}
\end{lem}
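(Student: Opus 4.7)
The plan is to reduce this to the classical linear independence of real exponentials with distinct exponents. The first observation is that the squaring map $x \mapsto x^2$ is injective on $[0, \infty)$, so the hypothesis that the $y_j$ are distinct nonnegative reals translates immediately to the fact that $\alpha_j := y_j^2$ are distinct nonnegative reals. After relabeling, I may assume $0 \leq \alpha_1 < \alpha_2 < \dotsb < \alpha_n$, and the hypothesis becomes $\sum_{j=1}^n c_j e^{-\lambda \alpha_j} = 0$ for all $\lambda > 0$.

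Next I would peel off the coefficients one at a time via an asymptotic argument. Multiplying the identity by $e^{\lambda \alpha_1}$ yields
\[
c_1 + \sum_{j=2}^n c_j e^{-\lambda(\alpha_j - \alpha_1)} = 0 \quad \text{for all } \lambda > 0.
\]
Because $\alpha_j - \alpha_1 > 0$ for $j \geq 2$, letting $\lambda \to \infty$ sends every term in the sum to zero, forcing $c_1 = 0$. Iterating: once $c_1 = \dotsb = c_{k-1} = 0$ has been established, the surviving identity is $\sum_{j=k}^n c_j e^{-\lambda \alpha_j} = 0$, and multiplying through by $e^{\lambda \alpha_k}$ and again letting $\lambda \to \infty$ gives $c_k = 0$. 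This finite induction yields $c_j = 0$ for all $j$.

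If a non-inductive packaging is preferred, one can instead evaluate the identity at $\lambda = 1, 2, \dotsc, n$. Setting $\xi_j := e^{-\alpha_j}$, this produces the linear system $\sum_{j=1}^n c_j \xi_j^k = 0$ for $k = 1, \dotsc, n$, whose coefficient matrix $(\xi_j^k)_{k,j}$ is a Vandermonde matrix in the distinct positive numbers $\xi_1, \dotsc, \xi_n$, hence invertible; this also forces $c_j = 0$ for every $j$. Either route serves equally well in the subsequent application.

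I do not anticipate a substantive obstacle. The only conceptual point to highlight is the role of the nonnegativity hypothesis: without it, pairs $(y_j, -y_j)$ would collapse after squaring, producing nontrivial cancellations (as the example $n = 2$, $y_2 = -y_1$, $c_2 = -c_1$ given in the paper's discussion shows), and the conclusion would genuinely fail. Nonnegativity is exactly what is needed for the injectivity of squaring, which is what lets the classical exponential linear-independence argument go through unaltered.
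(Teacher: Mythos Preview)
Your primary argument is correct and is essentially the paper's own proof: the paper also orders the $y_j$ (equivalently the $y_j^2$), multiplies through by the exponential corresponding to the smallest index with a potentially nonzero coefficient, and sends $\lambda \to \infty$. The only cosmetic difference is that the paper phrases it as a single contradiction (take the least $j_0$ with $c_{j_0}\neq 0$ and obtain a limit of $1$ for an identically-zero function) rather than an explicit induction peeling off one coefficient at a time. Your Vandermonde alternative is a valid and slightly different route, but it is auxiliary to your main argument, which matches the paper.
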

\begin{proof}
Without loss of generality, we may assume that $0 \leq y_1 < y_2 < \dotsb < y_n$.  Suppose that \eqref{eq:zerocond} holds, but that not all $c_j$ are zero; let $j_0$ be\vspace{3 pt} the smallest $j$ such that $c_j \neq 0$.  Then multiplying \eqref{eq:zerocond} by $\disp \frac{e^{\lambda y_{j_0}^2}}{c_{j_0}}$, we get that for all $\lambda > 0$,
\[
g(\lambda ; \yvec, \cvec) := 1 + \sum_{j = j_0 + 1}^n c_j e^{- \lambda ( y_j^2 - y_{j_0}^2)} = 0.
\]
Yet by $0 \leq y_{j_0}  < y_j$ for all $j > j_0$, $\disp \lim_{\lambda \to \infty} g(\lambda; \yvec, \cvec) = 1$, but $\disp \lim_{\lambda \to \infty} 0 = 0$.  Contradiction.
\end{proof}
In particular, then, for $\yvec$ and $\cvec$ chosen as  Corollary~\ref{cor:sample}, we have that $y_j = j -1$, so the $y_j$'s are distinct nonnegative real numbers.  By Lemma~\ref{lem:biglinind},
$\disp \sum_{j = 1}^n c_j e^{- \lambda y_j^2} \equiv 0$  if and only if $c_j = 0$ for all $j$, which does not hold for the choice of $c_j$ as in Corollary~\ref{cor:sample}.  Therefore, for an appropriate choice of $n$, $\bracepair{y_j}_{j = 1}^n$, and $\bracepair{c_j}_{j = 1}^n$, the integral is nonzero, so the expression in \eqref{eq:ztone} is strictly negative or positive, depending on the parity of $\ourintT$.  
\end{proof}

\subsection{Conclusion} \label{subsect:conclude}

We now finish the proof of Theorem 3.

\begin{proof}[Proof of Theorem 3]
Fix $\ourvarT \in \bigcup_{n \in \NN}(2k - 1, 2k)$, and write $\ourvarT = \ourintT + \ourfracT$,  where $\ourintT \in \NN$ is odd, and $\alpha \in (0, 1)$.  Also, fix $a > 0$.  By Corollary~\ref{cor:point} and Corollary~\ref{cor:sample}, for $n = \ourintT + 2$, $\yvec = (0, 1, \dotsc, \ourintT + 1)$, and\\ $\disp \cvec = \left( (-1)^{j - 1} \binom{\ourintT + 1}{j - 1} \right)_{j = 1}^{\ourintT + 2} $, \vspace{3 pt}all terms of degree less than $\ourvarT$ in \eqref{eq:fullz} have zero coefficients.  By Claim~\ref{clm:zt}, the same choice of $n$, $\yvec$, and $\cvec$  ensures that the coefficient of $z^{\ourvarT}$ in \eqref{eq:fullz} is negative.  Since all remaining terms are of degree larger than $\ourvarT$, we have that \eqref{eq:fullz} is of the form
\[
f\left( z;n, \yvec, \cvec \right) =  z^{\ourvarT} \left( \kappa + \epsilon(z) \right),
\]
where $\kappa <  0$ is the $z^{\ourvarT}$ coefficient, and since $r_m > t$ for all terms such that $b_m \neq 0$, $\disp \lim_{z \to 0^+} \epsilon(z) = 0$.  Then
$$\lim_{z \to 0^+} \left(\kappa + \epsilon(z)\right) = \kappa < 0.$$
Thus, for sufficiently small positive $z$,  $f$ is negative; hence, $K_{\ourvarT, a}$ is not positive-definite.
\end{proof}

To finish this section, we note that we still have not solved the specific question raised in \cite{MitSob}, namely the existence of negative eigenvalues for $\Kopmore$ in the case $t = 2$, $a$ near 0.  We therefore end with the following. \vspace{1ex}

\noindent
\textbf{Open Problem.}  Determine whether or not $\Kopmore$ admits a negative eigenvalue for $t = 2$ and $0 < a \leq 12 =: a_0(2)$.  

\section{Proof of Lemma~\ref{lem:wt}} \label{sec:appendix}

To prove Lemma~\ref{lem:wt}, we use the following $w$-dependent bound on the $L^1$-norm of the integrand in $h(w; \ourvarR)$.

\begin{lem}
If $\Re(w) \geq 0$, $\ourvarR > 0$, $\ourvarR \not\in \NN$, then writing $\ourvarR = \ourintR + \ourfracR$, where $\ourintR \in \NN \cup \bracepair{0}$ and $\ourfracR \in (0, 1)$,
\begin{equation}\int_0^{\infty} \left\vert \left( \sum_{\ell = 0}^{\ourintR} \frac{(-\lambda w)^{\ell}}{\ell !} \right) - e^{-\lambda w}  \right\vert \, \frac{d\lambda}{\lambda^{\ourintR + 1 + \ourfracR}}  \leq C(\ourvarR) \vert w \vert^{\ourvarR}, \label{eq:wbound}
\end{equation}
where $C(\ourvarR)$ is a constant depending on $\ourvarR$. \label{lem:wbound}
\end{lem}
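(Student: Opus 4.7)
My plan is to prove the bound by scaling out $|w|^{\ourvarR}$ via substitution, reducing the problem to a uniform-in-angle estimate on a one-parameter family of integrals depending only on $\arg w$.

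The first step is to dispense with $w = 0$ (both sides vanish trivially) and, for $w \neq 0$, substitute $\mu = \lambda |w|$, writing $w = |w| e^{i\theta}$ with $|\theta| \leq \pi/2$ since $\Re w \geq 0$. A short calculation shows the integral equals
\[
|w|^{\ourvarR} \int_0^{\infty} \left\vert \left( \sum_{\ell = 0}^{\ourintR} \frac{(-\mu e^{i\theta})^{\ell}}{\ell !} \right) - e^{-\mu e^{i\theta}}  \right\vert \, \frac{d\mu}{\mu^{\ourintR + 1 + \ourfracR}},
\]
so it suffices to bound this remaining integral uniformly in $\theta \in [-\pi/2, \pi/2]$.

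The next step is to split the $\mu$-integral at $\mu = 1$ and apply two different estimates on the integrand $\phi(\mu, \theta)$. For $\mu \in (0, 1]$, I would use the integral form of Taylor's remainder,
\[
e^{-z} - \sum_{\ell = 0}^{\ourintR} \frac{(-z)^{\ell}}{\ell!} = \frac{(-z)^{\ourintR + 1}}{\ourintR !} \int_0^1 (1-u)^{\ourintR} e^{-uz} \, du,
\]
applied at $z = \mu e^{i\theta}$. Since $\Re z \geq 0$, we have $|e^{-uz}| \leq 1$ for $u \in [0,1]$, giving $\phi(\mu, \theta) \leq \mu^{\ourintR + 1}/(\ourintR + 1)!$ uniformly in $\theta$. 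Then $\phi(\mu, \theta) \, \mu^{-\ourintR - 1 - \ourfracR} \leq \mu^{-\ourfracR}/(\ourintR + 1)!$, which is integrable on $(0, 1]$ because $\ourfracR \in (0,1)$. For $\mu \in [1, \infty)$, I would just use the triangle inequality on $\phi$ together with $|e^{-\mu e^{i\theta}}| \leq 1$ and $\left| \sum_{\ell=0}^{\ourintR} (-\mu e^{i\theta})^{\ell}/\ell! \right| \leq \sum_{\ell=0}^{\ourintR} \mu^{\ell}/\ell!$, to obtain a $\theta$-independent bound of the form $C(\ourvarR) \mu^{\ourintR}$ (the highest-degree term dominates for $\mu \geq 1$). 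Multiplication by $\mu^{-\ourintR - 1 - \ourfracR}$ yields $C(\ourvarR) \mu^{-1 - \ourfracR}$, integrable on $[1, \infty)$.

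Summing the two finite contributions produces a constant $C(\ourvarR)$ independent of $\theta$ (equivalently, independent of $w$), so restoring the prefactor gives the stated bound $C(\ourvarR) |w|^{\ourvarR}$. The only mildly delicate point is the Taylor-remainder estimate on the half-plane $\Re z \geq 0$, which hinges on $|e^{-uz}| \leq 1$ there; all other steps are elementary once the correct scaling and splitting have been chosen.
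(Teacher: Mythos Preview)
Your argument is correct and follows essentially the same route as the paper: scale by $\mu=\lambda|w|$ to extract $|w|^{\ourvarR}$, then split at $\mu=1$ and bound the two pieces separately using $|e^{-\mu\omega}|\leq 1$ on the closed right half-plane. The only cosmetic difference is that on $(0,1]$ you invoke the integral form of the Taylor remainder to get the single-term bound $\mu^{\ourintR+1}/(\ourintR+1)!$, whereas the paper bounds by the full tail $\sum_{\ell\geq \ourintR+1}\mu^{\ell}/\ell!$; both yield a finite constant depending only on $\ourvarR$.
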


\begin{proof}  If $w = 0$, both sides of \eqref{eq:wbound} are 0.  So assume that $w \neq 0$.  To take advantage of homogeneity, set $\mu = \vert w \vert \lambda$, $d\mu = \vert w \vert d\lambda$.  Letting $\disp \omega = \frac{w}{\vert w \vert} \in S^1 \cap \bracepair{\genercpx \in \CC: \Re \genercpx \geq 0}$, we get
\begin{align*}
& \int_0^{\infty} \left\vert \left( \sum_{\ell = 0}^{\ourintR} \frac{(-\mu \omega)^{\ell}}{\ell !} \right) - e^{-\mu \omega}  \right\vert \, \frac{d\mu/\vert w \vert}{(\mu / \vert w \vert)^{\ourintR + 1 + \ourfracR}} \\
& = \vert w \vert^{\ourvarR} \int_0^{\infty} \left\vert \left( \sum_{\ell = 0}^{\ourintR} \frac{(-\mu \omega)^{\ell}}{\ell !} \right) - e^{-\mu \omega}  \right\vert \, \frac{d\mu}{\mu^{\ourintR + 1 + \ourfracR}}.
\end{align*}
We have extracted the $\vert w \vert^{\ourvarR }$ term; now we need only show that for $\omega$ in $S^1 \cap \bracepair{\Re \genercpx \geq 0}$, the integral is bounded by a constant depending on $\ourvarR$ alone.   
We split the integral over $[0, \infty)$ into integrals over $[0, 1]$ and $[1, \infty)$. For the integral on $[0, 1]$, we recognize the initial sum as the first few terms of the Taylor series for $e^{- \mu \omega}$, and we get

\begin{align*}
&\int_0^{1} \left\vert \left( \sum_{\ell = 0}^{\ourintR} \frac{(- \mu \omega)^{\ell}}{\ell !} \right) - e^{- \mu \omega} \right\vert \,  \frac{d\mu}{\mu^{\ourintR + 1 + \ourfracR}}\\
 & = \int_0^{1} \left\vert - \left( \sum_{\ell = \ourintR + 1}^{\infty} \frac{(- \mu \omega)^{\ell}}{\ell !} \right) \right\vert \,  \frac{d\mu}{\mu^{\ourintR + 1 + \ourfracR}}\\
 & \leq \int_0^{1} \sum_{\ell = \ourintR + 1}^{\infty} \frac{1}{\ell !} \mu^{\ell - (\ourintR + 1 + \ourfracR)} \, d\mu \\
&=  \sum_{\ell = \ourintR + 1}^{\infty} \frac{1}{(\ell - (\ourintR + \ourfracR)) \cdot \ell!} \, .
\end{align*}

For the integral on $[1, \infty)$, we bound $e^{- \mu \omega}$ in absolute value by $1$ (by $\Re \omega \geq 0$) and get
\begin{align*}
& \int_{1}^{\infty} \vertpair{\left( \sum_{\ell = 0}^{\ourintR} \frac{(-\mu \omega)^{\ell}}{\ell !} \right) - e^{-\mu \omega}} \frac{1}{\mu^{\ourintR  + 1 + \ourfracR}} \, d\mu \\
 \leq &\int_{1}^{\infty} \left( \left( \sum_{\ell = 0}^{\ourintR}   \frac{1}{\ell !} \mu^{\ell - (\ourintR + 1 + \ourfracR)} \right) + \mu^{- (\ourintR + 1 + \ourfracR)} \right)  d\mu\\
 = &\left(\sum_{\ell = 0}^{\ourintR} \frac{-1}{(\ell - (\ourintR + \ourfracR)) \ell !} \right) + \frac{1}{(\ourintR + \ourfracR)}.
\end{align*}

Altogether, then, the integral is bounded by
\begin{align*}
 \left( \sum_{\ell =  0}^{\infty} \frac{1}{\vert\ell - (\ourintR + \ourfracR)\vert \cdot \ell!} \right) + \frac{1}{\ourintR + \ourfracR} .
\end{align*} 
Moreover, for all $\ell \in \NN \cup \bracepair{0}$, $\vert \ell - (\ourintR + \ourfracR) \vert \geq \min ( \ourfracR, 1 - \ourfracR) =: C_1(\ourfracR)$; hence, the term in parentheses is bounded above by
\[
(C_1(\ourfracR))^{-1} \sum_{\ell = 0}^{\infty} \frac{1}{\ell !} + \frac{1}{\ourintR + \ourfracR} = \frac{e}{C_1(\ourfracR)} + \frac{1}{\ourvarR}.
\]
Defining $\disp C(\ourvarR) := \frac{e}{C_1(\ourfracR)} + \frac{1}{\ourvarR}$, the proof is complete.
\end{proof}

\begin{proof}[Proof of Lemma~\ref{lem:wt}]
Since any noninteger $\ourvarR > 0$, can be written as $\ourvarR = \ourintR + \ourfracR$ for some $\ourintR \in \NN \cup \bracepair{0}$ and $\ourfracR \in (0, 1)$, it suffices to prove for each $\ourfracR \in (0, 1)$ that $\ourvarR = \ourintR + \ourfracR$ satisfies \eqref{eq:wt} for all $\ourintR$ in $\NN \cup \bracepair{0}$.  

We therefore induct on $\ourintR$.  We note that if $\ourvarR = 0 + \ourfracR$, implying $0 < \ourvarR < 1$, the fact is well known; see \cite{BCR}, p. 78.

Suppose that for some $\ourintR \in \NN \cup \bracepair{0}$, \eqref{eq:wt} is true for $\ourvarR = \thisp := \ourintR + \ourfracR$.  We will prove \eqref{eq:wt} true for  $\ourvarR = \thisq: = (\ourintR + 1) + \ourfracR$.  We first demonstrate that for $\ourvarR = q$, both sides of \eqref{eq:wt} have the same derivative, namely $\thisq \cdot h(w; \thisp)$.
 
The derivative of $w^{\thisq}$ is $\thisq w^{\thisq-1} = \thisq w^\thisp$.  By the inductive hypothesis, $\thisq w^{\thisp} =  \thisq \cdot h(w; \thisp)$.
On the other hand, the derivative of the integrand in $h(w; \thisq)$ is 
\begin{align*}
&\frac{d}{dw} \left(\left( \sum_{\ell = 0}^{\ourintR + 1} \frac{(-\lambda w)^{\ell}}{\ell !} \right) - e^{-\lambda w} \right)\cdot \frac{1}{\lambda^{\ourintR + 2 + \ourfracR}}\\
& =\left( \left( \sum_{\ell = 1}^{\ourintR + 1} \frac{(- \lambda w)^{\ell - 1} (- \lambda)}{\ell !} \right) - (- \lambda)e^{- \lambda w} \right) \frac{1}{\lambda^{\ourintR + 2 + \ourfracR}} \, .
\end{align*}
Reindexing by $u = \ell - 1$, we get
\[\left( \left( \sum_{u = 0}^{\ourintR} \frac{(- \lambda w)^{u} }{\ell !} \right) - e^{- \lambda w} \right)\cdot \left( \frac{-1}{\lambda^{\ourintR + 1 + \ourfracR}} \right)\, .
\]
The absolute value of this expression is the integrand in \eqref{eq:wbound}, with $s = p$.  Thus, by Lemma~\ref{lem:wbound}, we have $L^1(\RR^+, d\lambda)$ convergence, uniform in $w$ for $w \in \bracepair{\vert \genercpx \vert \leq M} \cap \bracepair{\Re \genercpx \geq 0}$, for any $M > 0$.  Hence, we can differentiate under the integral sign on the right-hand side, and we get
\begin{align*}
&\frac{d}{dw} ( h(w; q) )\\
& = (-1)^{(\ourintR + 1)} \frac{\risingfac{\ourfracR}{\ourintR + 2}}{\Gamma(1 - \ourfracR)}  \int_0^{\infty}  \left( \left( \sum_{u = 0}^{\ourintR} \frac{(- \lambda w)^{u}  }{u !} \right) -  e^{- \lambda w} \right)\cdot \left( \frac{-d\lambda}{\lambda^{\ourintR + 1 + \ourfracR}} \right)\\
& = (-1)^{\ourintR} \frac{( \ourintR + 1 + \ourfracR)\risingfac{\ourfracR}{\ourintR + 1}}{\Gamma(1 - \ourfracR)} \int_0^{\infty}  \left(\left( \sum_{u = 0}^{\ourintR} \frac{(- \lambda w)^{u}}{u !} \right) -  e^{-\lambda w} \right)\,  \frac{d\lambda}{\lambda^{\ourintR + 1 + \ourfracR}}.
\end{align*}
By definition, this is  $(\ourintR + 1 + \ourfracR) \cdot h(w; \thisp) = \thisq \cdot h(w; \thisp)$.  Yet the derivative of $w^{\thisq}$ was $\thisq \cdot h(w; \thisp)$.  So we see that $w^{\thisq}$ and $h(w; \thisq)$ have the same derivative.  Hence, they are equal up to a constant on the domain of mutual definition: $h(w; \thisq) - w^{\thisq} = D$ for all $w \in \bracepair{\Re \genercpx > 0}$.  

To show that $D = 0$, we note that as $w$ approaches $0$ along the positive real axis, $w^q \to 0$.  More importantly, by Lemma~\ref{lem:wbound}, as $w$ approaches $0$ along the positive real axis, $h(w; q) \to 0$ as well.  Hence,
\[D = \lim_{\substack{w \to 0^+ \\ 
w \text{ real}}} D = \lim_{\substack{w \to 0^+ \\ 
w \text{ real}}} h(w; \thisq) - w^{\thisq} = 0.  \]
\end{proof}

\bibliographystyle{plain}
\bibliography{AnIntOpPosDef.CEJM.refs}{}

\end{document}